\theoremstyle{plain}
\newtheorem{theorem}{Theorem}
\newtheorem{lemma}{Lemma}
\newtheorem{proposition}{Proposition}
\newtheorem{corollary}{Corollary}
\theoremstyle{definition}
\theoremstyle{remark}
\newtheorem*{remark}{Remark}
 \def\S{\mathbb{S}}
 \def\R{\mathbb{R}}
  \def\H{\mathbb{H}}
 \def\C{\mathbb{C}}
 \def\D{\mathcal{D}}
\title[CR Submanifolds]{CR submanifolds of the nearly K\" ahler $\S^3\times\S^3$ characterised by properties of the almost product structure}
\author{Miroslava Anti\' c}
\address{  University of Belgrade, Faculty of Mathematics, Studentski trg 16, 11000 Belgrade, Serbia}
\email{mira@matf.bg.ac.rs}
\author{Nata\v sa Djurdjevi\' c }
\address{ University of Belgrade, Faculty of Mathematics, Studentski trg 16, 11000 Belgrade, Serbia}
\email{natasa@matf.bg.ac.rs}
\author[Moruz]{Marilena Moruz}
\address{KU Leuven, Department of Mathematics, Celestijnenlaan 200B -- BOX 2400, 3001 Leuven, Belgium}
\email{ marilena.moruz@kuleuven.be}
\begin{document}

\begin{abstract}
In a previous paper \cite{ADjMV}, the authors together with L. Vrancken initiated the study of $3$-dimensional CR submanifolds of the nearly K\" ahler homogeneous $\S^3\times \S^3$. As is shown by Butruille this is one of only four homogeneous $6$-dimensional nearly K\"ahler manifolds. Besides its almost complex structure $J$ it also admits a canonical almost product structure $P$, see \cite{MV} and \cite{BDDV}. Along a $3$-dimensional CR submanifold the tangent space of $\S^3\times\S^3$ can be naturally split as the orthogonal sum of three $2$-dimensional vector bundles $\mathcal D_1$, $\mathcal D_2$ and $\mathcal D_3$.  We study the CR submanifolds in relation to the behavior of the almost product structure on these vector bundles.
\end{abstract}



\maketitle

\section{Introduction}

An almost Hermitian manifold $(\widetilde{M}, g, J)$, with Levi-Civita connection
$\widetilde{\nabla}$, is called a nearly K\" ahler manifold if for any tangent vector
$X$ it holds
$(\widetilde{\nabla}_X J)X=0$. If, moreover, $\widetilde{\nabla}J$ is a vanishing
tensor,  $\widetilde{M}$ is said to be a K\" ahler manifold. For that reason the skew symmetric tensor $G(X,Y) = (\widetilde{\nabla}_X J)Y$ plays an important role in the nearly K\"ahler geometry. A nearly K\"ahler manifold is called strict if and only if for any non zero vector $X$, $G(X,Y)$ does not vanish identically.
 It is known that there exist only four $6$-dimensional homogeneous nearly K\" ahler
 manifolds, that are not K\" ahler: the sphere $\S^6$, the complex projective space
 $\C P^3$, the
 flag manifold $\mathbb F^3$ and $\S^3\times\S^3$, see \cite{B}. One should also remark that the first examples of complete non homegeneous K\"ahler manifolds were recently discovered by Foscolo and Haskins in \cite{Haskins}.

In this paper we are in particular interested in the study of $3$-dimensional CR submanifolds of $\S^3\times\S^3$.
In general, a submanifold $M$ is called a CR submanifold
if there exists a $C^\infty$-differential $J$ invariant
distribution $\D_1$ on
$M$ (i.e., $J\D_1=\D_1$), such that its orthogonal
complement $\D_1^\perp$ in $TM$ is totally real
($J\D_1^\perp \subseteq T^\bot M$), where $T^\bot M$ is the
normal bundle over $M$. We say that $M$ is proper if it is neither almost complex, nor
totally
real. Note that, in the specific case of a $3$-dimensional submanifold $M$ of a $6$-dimensional (nearly) K\"ahler manifold, we have that $M$ is a proper CR submanifold if and only if $JT_pM \cap T_pM$ is a $2$-dimensional integrable distribution.

Along a $3$-dimensional CR submanifold $M$ of a $6$-dimensional nearly K\"ahler manifold $\tilde M$ the tangent space of $\tilde M$ can be naturally split as the orthogonal sum of three $2$-dimensional $J$ invariant vector bundles $\mathcal D_1$, $\mathcal D_2$ and $\mathcal D_3$ over $M$. Here $\mathcal D_1$ is as defined before, i.e. $\mathcal D_1 =JT_pM \cap T_pM$, $\mathcal D_2= \mathcal D_1^\perp \oplus J \mathcal D_1^\perp$ and $\mathcal D_3= G(\mathcal D_1,\mathcal D_2)$.

In contrast to the nearly K\"ahler manifold $\mathbb{S}^6$ whose CR submanifolds have been extensively studied in for example \cite{HM}, \cite{HMS}, \cite{S}; the manifold $\S^3\times\S^3$ admits an additional almost product structure $P$. Here we
 study the CR submanifolds in relation to the behavior of the almost product structure on these vector bundles. In doing so we obtain the first examples, together with their characterisations, of CR submanifolds for which $\mathcal D_1$ is not an integrable distribution. Note that examples with integrable $\mathcal D_1$ distribution can be easily obtained by moving an almost complex surface along by a $1$-parameter family of isometries.

\section{Preliminaries}
\label{prelim}

In the usual way we consider $\S^3$ as the unit sphere in $\R^4$ identified with the quaternions $\H$.
 Therefore, we can represent an arbitrary tangent vector at a point $(p,q)\in \S^3\times
 \S^3$ by $Z=(p\alpha, q\beta)$, where $\alpha$ and $\beta$ are imaginary
 quaternions.

The almost complex structure on $\S^3\times \S^3$ is given by, see for example \cite{BDDV,B}:
\begin{align*}
JZ_{(p, q)}=\frac{1}{\sqrt{3}}( p (2 \beta-\alpha), q(-2 \alpha +\beta))
\end{align*}
and a  compatible metric $g$  can be defined by
\begin{align*}
g(Z, Z')=\frac{1}{2}(\langle Z, Z'\rangle+\langle JZ, JZ'\rangle).
\end{align*}
where $\langle ., .\rangle$ is the standard metric on the quaternions.

It is known that this is indeed a nearly K\"ahler manifold and that therefore  the skew symmetric tensor field $G(X, Y)=(\widetilde{\nabla}_XJ)Y$ satisfies
 \begin{align}
&G(X, Y)+G(Y, X)=0,\nonumber\\
&G(X, JY)+JG(X, Y)=0,\nonumber\\
&g(G(X, Y),Z)+g(G(X, Z), Y)=0.\nonumber\\
&g(G(X, Y), G(Z, W))=\frac{1}{3}(g(X, Y)g(Y, W)-g(X, W)g(Y, Z)\nonumber\\
&\qquad\qquad \qquad +g(JX,Z)JY-g(JX, W)g(JZ, Y)),\label{Dzi1}
\end{align}

Also, in \cite{BDDV} the following almost product structure $P$ was introduced
\begin{align*}
&P(p\alpha, q \beta)=(p \beta,q \alpha),
\end{align*}
and it was also shown that it holds
\begin{align}\label{popste}
&P^2=Id,& &\hspace{-2mm}PJ=-JP,\\
&g(PZ, PZ')=g(Z, Z'),& &\hspace{-2mm}g(PZ, Z')=g(Z, PZ'),\nonumber\\
&PG(X, Z)+G(PX, PY)=0,& &\hspace{-2mm}(\tilde \nabla_X P)Y =\frac{1}{2} (J G(X, PY)+JPG(X, Y)).\nonumber
\end{align}
We note that the curvature tensor of $\widetilde{\nabla}$ is given by
\begin{align}\label{krivina}
&\widetilde{R}(X, Y)Z=\frac{5}{12}(g(Y,Z)X-g(X, Z)Y) \\
&+\frac{1}{12}(g(JY, Z)JX-g(JX,Z)JY-2g(JX,Y)JZ)\nonumber\\
&+\frac{1}{3}(g(PY,Z)PX-g(PX,Z)PY+
g(JPY,Z)JPX-g(JPX,Z)JPY).\nonumber
\end{align}

From \cite{MV} we see that the maps ${\mathcal F_{abc}}$, ${\mathcal F_1}$ and ${ \mathcal F_2}$ of $\mathbb{S}^3 \times   \mathbb{S}^3  $
defined respectively by
\begin{align*}
\mathcal F_{abc}(p,q)&=(a p\bar c,b q \bar c),\\
 \mathcal F_{1}(p,q) &=(q,p),\\
 \mathcal F_{2}(p,q) &=(\bar p,q \bar p),
\end{align*}
where $a,b,c$ are unitary quaternions are isometries (and therefore preserve the metric). The maps ${\mathcal F_{abc}}$ also preserve the almost complex structure $J$ and the almost product structure $P$. The same is not true for the group generated by the isometries $\mathcal F_1$ and $\mathcal F_2$. The isometries of this group still preserve $J$, at least up to sign. However they do not necessarely preserve $P$. This is due to the fact that, see \cite{MV}, there are actually three canonical almost product structure (satisfying the same fundamental properties), and the isometries of this group allow to switch between these different structures. In view of this it is clear that $\mathcal F_{abc}, \mathcal F_{1}, \mathcal F_{2}$ map a CR submanifold of $\S^3 \times \S^3$ to a CR submanifold of $\S^3 \times \S^3$. Moreover these maps also preserve the bundles $\mathcal D_1$ and $\mathcal D_1^\perp$.

Note that the structure  $P$ can be  expressed in terms of the usual product structure $Q(p\alpha,q \beta)=(-p\alpha,q \beta)$ by \begin{align}\label{Q}
      & QJ(Z)=\frac{1}{\sqrt{3}}(−2PZ + Z).
   \end{align}
Using this relation, for tangent vector $Z=(p\alpha,q \beta)=(U,V)$, we can obtain $U$ and $V$ in the following way
\begin{align}\label{Qp}
   & (U,0)=\frac{1}{2}(Z - Q(Z)),\hspace{15mm} (0,V)=\frac{1}{2}(Z + Q(Z)).
\end{align}
Also, the standard metric is given in terms of the nearly K\" ahler metric in the following way
\begin{align}
\langle Z, Z'\rangle=g(Z, Z')+\frac{1}{2}g(Z, PZ').\label{vezica}
\end{align}

Finally, for $X=(p\alpha,q\beta)$, $Y=(p\gamma,q\delta)$ $\in T_{(p,q)}\S^3\times \S^3$  it follows that
\begin{align}
G(X,Y )=&\frac{2}{3\sqrt{3}}(p(\beta \times \gamma + \alpha\times \delta + \alpha \times \gamma -2 \beta \times\delta),\nonumber\\
&q(-\alpha\times\delta - \beta\times \gamma + 2\alpha\times \gamma - \beta\times\delta)).\label{Gs}
\end{align}

Recall that for tangent vector fields $X, Y$ and normal $\xi$, the formulas of Gauss
and Weingarten are given by
\begin{align*}
&\widetilde{\nabla}_X Y=\nabla_X Y+h(X, Y),\\
&\widetilde{\nabla}_X \xi=-A_\xi X+\nabla^\perp_X\xi,
\end{align*}
where $\nabla, \nabla^\perp$ are, respectively,  the induced connection of the
submanifold and the connection in the normal bundle, while $h$ and $A$ are the  second
fundamental form
and the shape operator. Then we have that $g(h(X, Y), \xi)=g(A_\xi X, Y)$.

In \cite{DLMV} it was shown that the relation between the Euclidean connection
$\nabla^E$ of $\S^3\times\S^3$ and $\widetilde{\nabla}$ is given by
\begin{align}
\nabla^E_XY=\widetilde{\nabla}_X Y+\frac{1}{2}(JG(X, PY)+JG(Y, PX)).\label{veza1}
\end{align}

One should  notice the following. Since the connection $D$ in the space $\R^8$ satisfies
$D_{E_i}f=df(E_i)=(p\alpha_i, q\beta_i)$, we have that
\begin{align}
&D_{E_j}D_{E_i} f=D_{E_j}(p\alpha_i, q\beta_i)=(p\alpha_j\alpha_i+p E_j(\alpha_i),
q\beta_j\beta_i+q E_j(\beta_i))\nonumber\\
&=-(\langle\alpha_j,\alpha_i\rangle p, \langle \beta_j, \beta_i\rangle
q)+(p(\alpha_j\times\alpha_i+E_j(\alpha_i)),
q(\beta_j\times\beta_i+E_j(\beta_i))).\nonumber
\end{align}
Since the second fundamental form of the immersion of $\S^3\times\S^3$ is given by
\begin{align*}\widehat{h}(E_i, E_j)=-(\langle\alpha_j,\alpha_i\rangle p, \langle
\beta_j, \beta_i\rangle q),\end{align*}
we have that
\begin{align}
\nabla^E_{E_j}E_i=(p(\alpha_j\times\alpha_i+E_j(\alpha_i)),
q(\beta_j\times\beta_i+E_j(\beta_i))).\label{koneksija}
\end{align}

\section{The suitable moving frame for $3$-dimensional CR submanifolds}

We now recall from \cite{ADjMV} a moving frame along a $3$-dimensional proper CR submanifold
$M$ suitable for making computations. We have that the almost complex distribution $\D_1$ is
$2$-dimensional, while the totally real distribution $\D_1^\perp$ is of
dimension one. We can take unit vector fields $E_1$ and $E_2=JE_1$ that span $\D_1$,
and $E_3$ that spans $\D_1^\perp$.
We consider the  nearly K\" ahler metric to be denoted by $g$ throughout the paper, if it is not
explicitly stated otherwise. We have that $E_4=JE_3$ is a unit normal vector field. Then, from \cite{ADjMV} we know that the vector fields $E_1,\dots, E_4$, $E_5=\sqrt{3}G(E_1, E_3)\quad
\text{and}\quad E_6=\sqrt{3}G(E_2, E_3)=-JE_5$ form an orhtonormal moving frame.

In the same manner we obtain the following equalities
\begin{align}
&G(E_1, E_2)=0,& &G(E_1, E_3)=\frac{1}{\sqrt{3}}E_5,& &G(E_1, E_4)=\frac{1}{\sqrt{3}}
E_6,\nonumber\\
&G(E_1, E_5)=-\frac{1}{\sqrt{3}}E_3,& &G(E_1, E_6)=-\frac{1}{\sqrt{3}}E_4,& &G(E_2,
E_3)=\frac{1}{\sqrt{3}}E_6,\nonumber\\
&G(E_2, E_4)=-\frac{1}{\sqrt{3}}E_5,& &G(E_2, E_5)=\frac{1}{\sqrt{3}}E_4,& &G(E_2,
E_6)=-\frac{1}{\sqrt{3}}E_3,\nonumber\\
&G(E_3, E_4)=0,& &G(E_3, E_5)=\frac{1}{\sqrt{3}}E_1,& &G(E_3,
E_6)=\frac{1}{\sqrt{3}}E_2,\nonumber\\
&G(E_4, E_5)=-\frac{1}{\sqrt{3}}E_2,& &G(E_4, E_6)=\frac{1}{\sqrt{3}}E_1,& &G(E_5,
E_6)=0.\label{tabela}
\end{align}
Note that something similar can actually be done for a $3$-dimensional CR submanifold of any $6$-dimensional strict nearly K\"ahler manifold.\\
Under the assumption that $\{E_1, E_2, E_3\}$ is a positively oriented tangent frame of $M$, the vector fields $E_3$ and $E_4$ are uniquely determined. However, we do have a freedom
of rotating
$E_1$ in the almost complex distribution $\D_1$, which implies a corresponding rotation in $\mathcal D_3$. Then for a rotation angle $\varphi$ we have
\begin{align}
\widetilde{E}_1&=\cos\varphi E_1+\sin\varphi E_2,&  \widetilde{E}_2&=JE_1=-\sin\varphi
E_1+\cos\varphi E_2,\nonumber\\
\widetilde{E}_3&=E_3,& \widetilde{E}_4&=E_4,\nonumber\\
\widetilde{E}_5&=\cos\varphi E_5+\sin\varphi E_6,& \widetilde{E}_6&=-\sin
E_5+\cos\varphi E_6.\label{rot}
\end{align}

Now, let us denote the components of the connection, the second fundamental form and the normal connection respectively by
\begin{align*}
&\Gamma_{ij}^k=g(\widetilde{\nabla}_{E_i}E_j, E_k),\quad
h_{ij}^k=g(\widetilde{\nabla}_{E_i}E_j, E_{k+3}),\quad
b_{ij}^k=g(\widetilde{\nabla}_{E_i}E_{j+3}, E_{k+3}),
\end{align*}
for $1\leq i,j,k\leq 3$. Since the second fundamental form is symmetric, and
$\widetilde{\nabla}$ is the Levi-Civita we have that
\begin{align*}
\Gamma_{ij}^k=-\Gamma_{ik}^j,\quad b_{ij}^k=-b_{ik}^j,\quad
h_{ij}^k=h_{ji}^k.
\end{align*}

We recall from \cite{ADjMV} following three lemmas
\begin{lemma}\label{lemma1}
The coefficients $\Gamma_{ij}^k, h_{ij}^k, b_{ij}^k$ satisfy
\begin{align*}
&\Gamma_{11}^3=h_{12}^1,\hspace{4mm} \Gamma_{12}^3=-h_{11}^1,\hspace{4mm}
\Gamma_{21}^3=h_{22}^1, \hspace{4mm} \Gamma_{22}^3=-h_{12}^1,\hspace{4mm}
\Gamma_{31}^3=h_{23}^1,\\
&\Gamma_{32}^3=-h_{13}^1,\hspace{3mm}h_{11}^2=-h_{12}^3,\hspace{3mm}
h_{12}^2=h_{11}^3,\hspace{3mm}
h_{13}^3=h_{23}^2+\frac{1}{\sqrt{3}},\hspace{3mm}h_{22}^2=h_{12}^3,\\
&h_{22}^3=-h_{11}^3,\hspace{4mm} h_{23}^3=-h_{13}^2,\hspace{4mm}
b_{11}^2=h_{13}^3+\frac{1}{\sqrt{3}}, \hspace{4mm} b_{11}^3=-h_{13}^2,\\
&b_{21}^2=-h_{13}^2,\hspace{4mm}
b_{21}^3=-h_{13}^3+\frac{2}{\sqrt{3}},  \hspace{4mm}
b_{31}^2=h_{33}^3,  \hspace{4mm}
b_{31}^3=-h_{33}^2.
\end{align*}

\end{lemma}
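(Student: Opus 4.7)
The strategy is to derive all the listed identities from the single nearly K\"ahler identity $\widetilde\nabla_X(JY)=J\widetilde\nabla_XY+G(X,Y)$, applied to the frame relations $E_2=JE_1$ and $E_4=JE_3$, with $X=E_1,E_2,E_3$ in turn. Since $JE_1=E_2$, $JE_3=E_4$, $JE_5=-E_6$, $JE_6=E_5$, and the table \eqref{tabela} gives the exact values of $G(E_i,E_j)$, once each $\widetilde\nabla_{E_i}E_j$ is expanded in the orthonormal frame everything reduces to matching coefficients.

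First I would work with $E_2=JE_1$. Writing
$$\widetilde\nabla_{E_i}E_1=\sum_{k=1}^{3}\Gamma_{i1}^{k}E_k+\sum_{k=1}^{3}h_{i1}^{k}E_{k+3}$$
and similarly for $\widetilde\nabla_{E_i}E_2$, the equation $\widetilde\nabla_{E_i}E_2=J\widetilde\nabla_{E_i}E_1+G(E_i,E_1)$ has six scalar components. The $E_1$ and $E_2$ components are automatically satisfied by the Levi-Civita skew-symmetry $\Gamma_{ij}^k=-\Gamma_{ik}^j$. The $E_3$ and $E_4$ components produce $\Gamma_{i1}^3=h_{i2}^1$ and $\Gamma_{i2}^3=-h_{i1}^1$, which (after using $h_{ij}^k=h_{ji}^k$) delivers the first six identities of the lemma. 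The $E_5,E_6$ components for $i=1,2$ yield $h_{11}^2=-h_{12}^3$, $h_{12}^2=h_{11}^3$, $h_{22}^2=h_{12}^3$ and $h_{22}^3=-h_{11}^3$; for $i=3$ they yield $h_{13}^3=h_{23}^2+\tfrac{1}{\sqrt{3}}$ and $h_{23}^3=-h_{13}^2$, with the $\tfrac{1}{\sqrt{3}}$ originating from $G(E_3,E_1)=-\tfrac{1}{\sqrt{3}}E_5$.

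Next I would apply the same recipe to $E_4=JE_3$. Since $E_4$ is normal, before comparing I rewrite the tangential components of $\widetilde\nabla_{E_i}E_4$ via $g(\widetilde\nabla_{E_i}E_4,E_k)=-g(E_4,\widetilde\nabla_{E_i}E_k)=-h_{ik}^1$, so that the frame expansion of $\widetilde\nabla_{E_i}E_4$ is built from $-h_{ik}^1$ in the tangent part and $b_{i1}^k$ in the normal part. Matching the $E_5,E_6$ components of $\widetilde\nabla_{E_i}E_4=J\widetilde\nabla_{E_i}E_3+G(E_i,E_3)$ gives for $i=1$ the identities $b_{11}^2=h_{13}^3+\tfrac{1}{\sqrt{3}}$ and $b_{11}^3=-h_{13}^2$; for $i=2$, after substituting the $h$-relations already obtained in the previous step, $b_{21}^2=-h_{13}^2$ and $b_{21}^3=-h_{13}^3+\tfrac{2}{\sqrt{3}}$; and for $i=3$, since $G(E_3,E_3)=0$, $b_{31}^2=h_{33}^3$ and $b_{31}^3=-h_{33}^2$. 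The tangent components give no new identity beyond those already obtained or encoded in the skew-symmetry of $\Gamma$ and $b$.

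The argument is entirely computational, so the main obstacle is just combinatorial bookkeeping: tracking the sign $JE_5=-E_6$, applying $h_{ij}^k=h_{ji}^k$ and the two skew-symmetries at the correct points, and propagating the small $\tfrac{1}{\sqrt{3}}$ corrections from \eqref{tabela} so that they combine into the $\tfrac{2}{\sqrt{3}}$ appearing in the formula for $b_{21}^3$. No deeper structural ingredient is needed.
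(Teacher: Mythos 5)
Your derivation is correct: applying $\widetilde\nabla_X(JY)=J\widetilde\nabla_XY+G(X,Y)$ to $E_2=JE_1$ and $E_4=JE_3$ with $X=E_1,E_2,E_3$, using $JE_5=-E_6$, the table \eqref{tabela}, and the symmetries $\Gamma_{ij}^k=-\Gamma_{ik}^j$, $b_{ij}^k=-b_{ik}^j$, $h_{ij}^k=h_{ji}^k$, reproduces every identity in the lemma (I checked in particular that $b_{21}^3=-h_{23}^2+\tfrac{1}{\sqrt3}=-h_{13}^3+\tfrac{2}{\sqrt3}$ comes out as you describe). The paper itself gives no proof --- it recalls the lemma from \cite{ADjMV} --- but your computation is the natural one and matches what that derivation must be.
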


\begin{lemma}It holds
\begin{align*}
&b_{12}^3=\Gamma_{11}^2-\Gamma_{32}^3,\hspace{3mm}
b_{22}^3=\Gamma_{21}^2+\Gamma_{31}^3, \hspace{3mm}
b_{32}^3=h_{33}^1+\Gamma_{31}^2
\end{align*}
\end{lemma}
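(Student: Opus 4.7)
My plan is to derive all three identities of the lemma at once by applying the covariant derivative $\widetilde{\nabla}_{E_i}$ (for $i=1,2,3$) to the defining relation $E_5=\sqrt{3}\,G(E_1,E_3)$ and taking the inner product with $E_6$. By the tensorial product rule,
\[
\widetilde{\nabla}_{E_i}E_5=\sqrt{3}\bigl[(\widetilde{\nabla}_{E_i}G)(E_1,E_3)+G(\widetilde{\nabla}_{E_i}E_1,E_3)+G(E_1,\widetilde{\nabla}_{E_i}E_3)\bigr],
\]
so that $b_{i2}^3=g(\widetilde{\nabla}_{E_i}E_5,E_6)$ splits into three contributions to be paired with $E_6$.

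The first step is to show that $g((\widetilde{\nabla}_{E_i}G)(E_1,E_3),E_6)=0$ for $i=1,2,3$. In any six-dimensional strict nearly K\"ahler manifold of constant type (which is guaranteed here by \eqref{Dzi1} with constant $\tfrac13$), the tensor $(\widetilde{\nabla}_WG)(X,Y)$ is a universal algebraic combination of the six vectors $W$, $X$, $Y$, $JW$, $JX$, $JY$ with scalar coefficients built from the metric $g$. Since $W\in\{E_1,E_2,E_3\}$, $X=E_1$ and $Y=E_3$, each of these six vectors lies in $\mathrm{span}\{E_1,E_2,E_3,E_4\}$, which is orthogonal to $E_6$.

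Next, I would expand $\widetilde{\nabla}_{E_i}E_1$ and $\widetilde{\nabla}_{E_i}E_3$ in the moving frame $E_1,\dots,E_6$ and apply \eqref{tabela}. Inspection of the table shows that $G(E_a,E_3)$ has a nonzero $E_6$ component only for $a=2$, with value $\tfrac{1}{\sqrt{3}}$, and similarly $G(E_1,E_a)$ has a nonzero $E_6$ component only for $a=4$, again with value $\tfrac{1}{\sqrt{3}}$. Since the $E_2$ coefficient of $\widetilde{\nabla}_{E_i}E_1$ is $\Gamma_{i1}^2$ and the $E_4$ coefficient of $\widetilde{\nabla}_{E_i}E_3$ is $h_{i3}^1$, we obtain
\[
\sqrt{3}\,g(G(\widetilde{\nabla}_{E_i}E_1,E_3),E_6)=\Gamma_{i1}^2,\qquad \sqrt{3}\,g(G(E_1,\widetilde{\nabla}_{E_i}E_3),E_6)=h_{i3}^1,
\]
and therefore $b_{i2}^3=\Gamma_{i1}^2+h_{i3}^1$ for each $i=1,2,3$.

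The three identities of the lemma then follow by rewriting $h_{i3}^1$ using Lemma~\ref{lemma1}: $h_{13}^1=-\Gamma_{32}^3$ yields the first, $h_{23}^1=\Gamma_{31}^3$ yields the second, and $h_{33}^1$ is kept as is in the third. The main obstacle is justifying the vanishing of the $(\widetilde{\nabla}_{E_i}G)(E_1,E_3)$ contribution without appealing to an explicit formula for $\widetilde{\nabla}G$ on a six-dimensional nearly K\"ahler manifold; once that observation is granted, the rest reduces to the bookkeeping described above.
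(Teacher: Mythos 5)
Your computation is correct and is the natural route to this lemma; the paper itself only recalls the statement from \cite{ADjMV} without proof, and your expansion of $b_{i2}^3=g(\widetilde{\nabla}_{E_i}(\sqrt{3}\,G(E_1,E_3)),E_6)$ together with $h_{13}^1=-\Gamma_{32}^3$ and $h_{23}^1=\Gamma_{31}^3$ from Lemma~\ref{lemma1} is exactly what is needed. The one point you flag as an obstacle is not one: on a $6$-dimensional strict nearly K\"ahler manifold one has the standard explicit formula $(\widetilde{\nabla}_XG)(Y,Z)=\tfrac13\bigl(g(X,Y)JZ-g(X,Z)JY-g(JY,Z)X\bigr)$ (see \cite{BDDV}), whose value on $(E_i,E_1,E_3)$ with $i\in\{1,2,3\}$ manifestly lies in $\mathrm{span}\{E_1,E_2,E_3,E_4\}$ and hence is orthogonal to $E_6$, as you claimed.
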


The tensor field $P$ can be written as it follows.

\begin{lemma} There exists an open dense subset of $M$ such that, with respect to a suitable choice of $E_1$ belonging to  $\D_1$,  the tensor field
$P$ is given in the frame $E_1,\dots, E_6$ by
\begin{align}
PE_1&= \cos\theta E_1+a_1\sin\theta E_3+a_2\sin\theta E_4+a_3\sin\theta
E_5+a_4\sin\theta E_6,\nonumber\\
PE_2&=-\cos\theta E_2+a_2\sin\theta E_3-a_1\sin\theta E_4-a_4\sin\theta
E_5+a_3\sin\theta E_6,\nonumber\\
PE_3&= a_1\sin\theta E_1+ a_2\sin\theta E_2+(a_3^2-a_4^2+(a_2^2-a_1^2)\cos\theta)
E_3\nonumber\\
   &+2(a_3a_4-a_1a_2\cos\theta) E_4 -(a_1a_3+a_2a_4)(1+\cos\theta)E_5\nonumber\\
   &+(a_2a_3-a_1a_4)(-1+\cos\theta)E_6\nonumber\\
PE_4&=a_2\sin\theta E_1-a_1\sin\theta E_2+2(a_3a_4-a_1a_2\cos\theta)E_3\nonumber\\
   &+(a_4^2-a_3^2+(a_1^2-a_2^2)\cos\theta)E_4-(a_2a_3-a_1a_4)(-1+\cos\theta)E_5\nonumber\\
   &-(a_1a_3+a_2a_4)(1+\cos\theta)E_6,\nonumber\\
PE_5&=a_3\sin\theta E_1-a_4\sin\theta E_2-(a_1a_3+a_2a_4)(1+\cos\theta)E_3\nonumber\\
&-(a_2a_3-a_1a_4)(-1+\cos\theta)E_4+(a_1^2-a_2^2+(a_4^2-a_3^2)\cos\theta)E_5\nonumber\\
&+2(a_1a_2-a_3a_4\cos\theta)E_6,\nonumber\\
PE_6&=a_4\sin\theta E_1+a_3\sin\theta
E_2+(a_2a_3-a_1a_4)(-1+\cos\theta)E_3\nonumber\\
&-(a_1a_3+a_2a_4)(1+\cos\theta)E_4+2(a_1a_2-a_3a_4\cos\theta) E_5\nonumber\\
&+(a_2^2-a_1^2+(a_3^2-a_4^2)\cos\theta)E_6,\label{Peovi}
\end{align}
for some differentiable functions $\theta, a_1, a_2, a_3, a_4$ such that $\sum
a_i^2=1$.
\end{lemma}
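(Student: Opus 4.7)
The plan is to use the algebraic identities satisfied by $P$ in (\ref{popste}) --- $g$-symmetry, $P^2=\mathrm{Id}$, and $PJ=-JP$ --- supplemented where needed by the nearly K\"ahler identity $PG(X,Y)+G(PX,PY)=0$, to normalise $P$ after a canonical rotation of $E_1$ inside $\mathcal{D}_1$.

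First, observe that since the decomposition $T_p\tilde M=\mathcal{D}_1\oplus\mathcal{D}_2\oplus\mathcal{D}_3$ is $J$-invariant, the orthogonal projection $\pi_1\colon T_p\tilde M\to\mathcal{D}_1$ commutes with $J$; consequently $f:=\pi_1\circ P|_{\mathcal{D}_1}$ is $g$-symmetric and anticommutes with $J|_{\mathcal{D}_1}$, hence is trace-free on the two-dimensional $\mathcal{D}_1$. On the open dense subset of $M$ where $f\not\equiv 0$, $f$ has eigenvalues $\pm\cos\theta$ for some $\cos\theta\in(0,1]$; I would choose $E_1$ as a unit $(+\cos\theta)$-eigenvector of $f$, so that $E_2=JE_1$ is automatically a $(-\cos\theta)$-eigenvector. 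Then $g(PE_1,E_1)=\cos\theta$ and $g(PE_1,E_2)=0$, and $|PE_1|=1$ forces the remaining part of $PE_1$ to lie in $\mathcal{D}_2\oplus\mathcal{D}_3=\mathrm{span}(E_3,E_4,E_5,E_6)$ with length $|\sin\theta|$. Writing this part as $\sin\theta(a_1E_3+a_2E_4+a_3E_5+a_4E_6)$ yields the stated formula for $PE_1$ with $\sum a_i^2=1$, and $PE_2=PJE_1=-JPE_1$, combined with $JE_3=E_4$ and $JE_5=-E_6$ (using $E_6=-JE_5$), gives the formula for $PE_2$.

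For the vectors $PE_3,PE_4,PE_5,PE_6$, the $E_1$- and $E_2$-components are immediately read off from $g$-symmetry via $g(PE_k,E_j)=g(E_k,PE_j)$. The anti-commutation $PJ=-JP$ combined with $E_4=JE_3$ and $E_6=-JE_5$ yields $PE_4=-JPE_3$ and $PE_6=JPE_5$, reducing the task to finding the eight components of $PE_3$ and $PE_5$ along $E_3,E_4,E_5,E_6$. These are then determined by imposing $P^2E_1=E_1$ and $P^2E_3=E_3$ and projecting onto $E_3,E_4,E_5,E_6$, producing a linear-and-quadratic system in the unknowns whose consistency requires $\sum a_i^2=1$; any remaining ambiguity is removed by the nearly K\"ahler identity $PG(X,Y)=-G(PX,PY)$ applied to pairs from the table (\ref{tabela}), e.g.\ $(X,Y)=(E_1,E_3)$ and $(E_1,E_5)$, whose left-hand sides are proportional to $PE_5$ and $PE_6$ respectively.

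The main obstacle is computational: assembling these constraints so that the coefficients in (\ref{Peovi}) emerge in their stated quadratic-in-$a_i$ form with the trigonometric factors $(1\pm\cos\theta)$ requires methodical bookkeeping, repeated use of $\sum a_i^2=1$, and of the multiplication rules (\ref{tabela}) for $G$. Conceptually the argument is a direct linear-algebra computation adapted to the $J$-invariant splitting $\mathcal{D}_1\oplus\mathcal{D}_2\oplus\mathcal{D}_3$; no new structural idea is needed beyond the clean choice of $E_1$ made at the outset.
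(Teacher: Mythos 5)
The paper does not actually prove this lemma: it is recalled verbatim from \cite{ADjMV}, so there is no internal proof to measure your argument against. Judged on its own terms, your strategy is the natural one and its skeleton is correct: $\pi_1\circ P|_{\mathcal D_1}$ is indeed $g$-symmetric and $J$-anticommuting, hence trace-free with eigenvalues $\pm\cos\theta$, and choosing $E_1$ as a unit eigenvector for $+\cos\theta$ gives the stated $PE_1$ with $\sum a_i^2=1$; the formulas $PE_2=-JPE_1$, $PE_4=-JPE_3$, $PE_6=JPE_5$ and the $g$-symmetry of $P$ then account for the first two rows and columns of (\ref{Peovi}) exactly as written. One small imprecision: the set where $\pi_1\circ P|_{\mathcal D_1}\neq 0$ is open but need not be dense; the open dense set of the statement is its union with the interior of its complement, where one takes $\cos\theta=0$ and an arbitrary smooth unit $E_1$ (this is precisely the residual rotation freedom the paper notes right after the lemma).

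The one point you undersell is the role of $PG(X,Y)+G(PX,PY)=0$. Writing $P$ in blocks with respect to $\mathcal D_1\oplus(\mathcal D_2\oplus\mathcal D_3)$, the relations $P^2=\mathrm{Id}$, $P^{T}=P$ and $PJ=-JP$ determine the $(\mathcal D_2\oplus\mathcal D_3)$-block $B$ only on the image of the off-diagonal block $C$ (via $BC=-CA$); on the $J$-invariant orthogonal complement of that image they merely force $B$ to be a symmetric involution anticommuting with $J$, i.e.\ a reflection with a free angle, and when $\sin\theta=0$ the image of $C$ is trivial so even more is left undetermined. Thus the $G$-compatibility identity (e.g.\ $PE_5=-\sqrt{3}\,G(PE_1,PE_3)$ and $PE_3=\sqrt{3}\,G(PE_1,PE_5)$, read off from (\ref{tabela})) is not a tie-breaker but carries an essential, continuous part of the determination; it is what produces the coupled quadratic coefficients with the factors $(1\pm\cos\theta)$. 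The computation does close up --- for instance the $E_1$-component of $-\sqrt{3}\,G(PE_1,PE_3)$ equals $a_3\sin\theta$ precisely because $\sum a_i^2=1$ --- so your proposal is a correct outline whose substance is the deferred bookkeeping.
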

Note that $E_1$ is determined up to sign if and only if $\cos \theta \ne 0$. If however $\cos \theta$ vanishes on an open set, we regain the freedom of rotation in  $\D_1$.

If we now look at $\mathcal{F}_i(f)$, then it is clear that this is again a CR immersion.
\begin{lemma}\label{Lemma4}
  If we denote the corresponding variables $\theta, a_1, a_2, a_3, a_4$ respectively by $\widehat\theta, \widehat a_1,\widehat a_2,\widehat a_3,\widehat a_4$ and $\widetilde\theta,\widetilde a_1,\widetilde a_2,\widetilde a_3,\widetilde a_4$, for $\mathcal F_1$ and $\mathcal F_2$, we find that:
  \begin{align*} & \widehat{\theta}= \theta, \hspace{3mm}\widehat a_1=a_1,\hspace{3mm}\widehat a_2=-a_2,\hspace{3mm}\widehat a_3=-a_3,\hspace{3mm}\widehat a_4=a_4;\\
  & \widetilde{\theta}= \theta,\hspace{3mm}\widetilde{a}_1=\frac{1}{2}a_1-\frac{\sqrt{3}}{2}a_2,\hspace{3mm}\widetilde{a}_2=\frac{\sqrt{3}}{2}a_1-\frac{1}{2}a_2,\hspace{3mm}\widetilde{a}_3=\frac{1}{2}a_3-\frac{\sqrt{3}}{2}a_4,\\ &\widetilde{a}_4=\frac{\sqrt{3}}{2}a_3+\frac{1}{2}a_4;
  \end{align*} where $\theta,\mbox{ }\widehat{\theta},\mbox{ }\widetilde{\theta}\in [0,\mbox{ }\frac{\pi}{2}]$.
\end{lemma}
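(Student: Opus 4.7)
The plan is to determine, for each isometry $\mathcal{F}_i$, how its differential interacts with $J$, $P$, and the tensor $G$, and then to use these relations to express the new suitable frame on $\mathcal{F}_i(M)$ in terms of $d\mathcal{F}_i$ applied to the old frame. Once this is done, applying $P$ to the first vector of the new frame and comparing with the normal form (\ref{Peovi}) reads off the transformed parameters.

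For $\mathcal{F}_1(p,q)=(q,p)$, the differential $d\mathcal{F}_1(p\alpha,q\beta)=(q\beta,p\alpha)$ satisfies $d\mathcal{F}_1\circ J=-J\circ d\mathcal{F}_1$ and $d\mathcal{F}_1\circ P=P\circ d\mathcal{F}_1$, as a direct quaternionic check using the formulas of Section~\ref{prelim} shows. The $J$-anti-commutation, together with the derived identity $G(d\mathcal{F}_1X,d\mathcal{F}_1Y)=-d\mathcal{F}_1\,G(X,Y)$, propagates sign changes through the defining identities $E_2=JE_1$, $E_4=JE_3$, $E_5=\sqrt3\,G(E_1,E_3)$ and $E_6=-JE_5$; fixing orientation leads to $\widehat{E}_1=d\mathcal{F}_1(E_1)$, $\widehat{E}_2=-d\mathcal{F}_1(E_2)$, $\widehat{E}_3=d\mathcal{F}_1(E_3)$, $\widehat{E}_4=-d\mathcal{F}_1(E_4)$, $\widehat{E}_5=-d\mathcal{F}_1(E_5)$, $\widehat{E}_6=d\mathcal{F}_1(E_6)$. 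Since $P$ commutes with $d\mathcal{F}_1$, we have $P\widehat{E}_1=d\mathcal{F}_1(PE_1)$, and substituting (\ref{Peovi}) and rewriting in $\{\widehat{E}_j\}$ yields $\widehat\theta=\theta$ with $(\widehat a_1,\widehat a_2,\widehat a_3,\widehat a_4)=(a_1,-a_2,-a_3,a_4)$.

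For $\mathcal{F}_2(p,q)=(\bar p,q\bar p)$, the differential $d\mathcal{F}_2(p\alpha,q\beta)=(-\alpha\bar p,q(\beta-\alpha)\bar p)$ still anti-commutes with $J$, but no longer commutes with $P$. Instead $P\circ d\mathcal{F}_2=d\mathcal{F}_2\circ R$ with pullback $R(p\alpha,q\beta)=(p(\alpha-\beta),-q\beta)$, which via (\ref{Q}) can be rewritten tensorially as $R=-\tfrac12 P+\tfrac{\sqrt3}{2}JP$. One checks $R^2=I$ and $RJ=-JR$, identifying $R$ as another canonical almost product structure, obtained from $P$ by a rotation of angle $2\pi/3$ in the pencil $\{\cos\psi\,P+\sin\psi\,JP\}$. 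The vector $\widetilde{E}_1\in d\mathcal{F}_2(\D_1)$ that brings $P\widetilde{E}_1$ into the form (\ref{Peovi}) is therefore not $d\mathcal{F}_2(E_1)$, but is obtained by a rotation of angle $\pi/3$ inside $d\mathcal{F}_2(\D_1)$, namely $\widetilde{E}_1=d\mathcal{F}_2\bigl(\tfrac12 E_1+\tfrac{\sqrt3}{2}E_2\bigr)$; this $\pi/3$-rotation is pinned down by the vanishing of the $\widetilde{E}_2$-coefficient of $P\widetilde{E}_1$ together with the normalisation $\cos\widetilde\theta\ge0$. The same rotation is induced inside $\mathcal{D}_3$ through $\widetilde{E}_5=\sqrt3\,G(\widetilde{E}_1,\widetilde{E}_3)$, and it is this phenomenon that produces the simultaneous linear rotations on the pairs $(a_1,a_2)$ and $(a_3,a_4)$.

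The main obstacle is the final book-keeping in the $\mathcal{F}_2$ case: substituting $R=-\tfrac12 P+\tfrac{\sqrt3}{2}JP$ and (\ref{Peovi}) into $P\widetilde{E}_1=d\mathcal{F}_2\,R\bigl(\tfrac12 E_1+\tfrac{\sqrt3}{2}E_2\bigr)$, then re-expressing each $d\mathcal{F}_2(E_j)$ through $\{\widetilde{E}_j\}$ by the same $J$- and $G$-transformation rules, and collecting coefficients to read off $(\widetilde a_1,\widetilde a_2,\widetilde a_3,\widetilde a_4)$. The identity $\widetilde\theta=\theta$ comes out of the normalisation and reflects the intrinsic fact that $\cos\theta=\max_{X\in\D_1,\,|X|=1}g(PX,X)$ is preserved by $\mathcal{F}_2$, even though $P$ itself does not commute with $d\mathcal{F}_2$.
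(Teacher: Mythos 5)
Your proposal is correct and follows essentially the same route as the paper: establish how $d\mathcal{F}_i$ interacts with $J$, $G$ and $P$ (in particular $P\circ d\mathcal{F}_2=d\mathcal{F}_2\circ(-\tfrac12 P+\tfrac{\sqrt3}{2}JP)$), deduce the transformed frame including the $\pi/3$ rotation in $\D_1$ for $\mathcal{F}_2$, and read off the new parameters from the normal form \eqref{Peovi}. The only cosmetic difference is that you pin down the rotation angle via the vanishing of the $\widetilde E_2$-coefficient of $P\widetilde E_1$ plus $\cos\widetilde\theta\ge 0$, whereas the paper maximises $\cos\widetilde\theta=g(P\widetilde E_1,\widetilde E_1)$ over rotations; these are equivalent characterisations.
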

\begin{proof}
First, we consider the immersion $\mathcal F_1$. The first vector field $\widehat{E}_1$$\in \D_1$ of the corresponding frame (\ref{Peovi}), is such that the function $\cos\widehat{\theta}=g(P\widehat{E}_1,\widehat{E}_1)$ attains the maximum for $\widehat{E}_1$. Recall from \cite{MV} that $P\circ\mathcal F_1=\mathcal F_1\circ P$. Then it follows straightforwardly that
 \begin{align*}
                              &\cos\widehat{\theta} =\\
                              &=g(P(\cos\alpha\mbox{ } d\mathcal{F}_1(E_1)+\sin\alpha\mbox{ } d\mathcal{F}_1(E_1)),\cos\alpha\mbox{ } d\mathcal{F}_1(E_1)+\sin\alpha\mbox{ } d\mathcal{F}_1(E_1))) \\
                              & =\cos(\theta)\cos(2\alpha).
                            \end{align*} So, the maximum is attained for $\alpha=0$, which implies  $\widehat{\theta}=\theta$. Using relations $$d\mathcal{F}_1(G(X,Y))=-G(d\mathcal{F}_1(X),d\mathcal{F}_1(Y)), \hspace{1cm}d\mathcal{F}_1\circ J=-J\circ d\mathcal{F}_1,$$ we obtain:
                            \begin{align*}
                              & \widehat{E}_1=d\mathcal{F}_1(E_1),  &\widehat{E}_2=J\widehat{E}_1=-d\mathcal{F}_1(E_2),\hspace{1.2cm}\\
                              & \widehat{E}_3=d\mathcal{F}_1(E_3),
                              &\widehat{E}_4=J\widehat{E}_3=-d\mathcal{F}_1(E_4),\hspace{1.2cm}\\
                              &\widehat{E}_5=\sqrt{3}G(\widehat{E}_1,\widehat{E}_3)=-d\mathcal{F}_1(E_5), &\widehat{E}_6=\sqrt{3}G(\widehat{E}_1,\widehat{E}_3)=d\mathcal{F}_1(E_6).
                            \end{align*} Now, straightforwardly we get \begin{align*}
                                                                       \widehat{a}_1 & =\langle P\widehat{E}_1, \widehat{E}_3 \rangle/\sin \widehat{\theta}= a_1.
                                                                     \end{align*} In a similar way we get the expressions for other $\widehat{a}_i$.\\
Look now at the immersion $\mathcal{F}_2$. Again, recall from \cite{MV} that $P\circ d\mathcal{F}_2=d\mathcal{F}_2\circ (-\frac{1}{2}P+\frac{\sqrt{3}}{2}JP)$. We have: \begin{align*}
                              &\cos\widetilde{\theta} =\\
                              &=g(P(\cos\alpha\mbox{ } d\mathcal{F}_2(E_1)+\sin\alpha\mbox{ } d\mathcal{F}_2(E_1)),\cos\alpha\mbox{ } d\mathcal{F}_2(E_1)+\sin\alpha\mbox{ } d\mathcal{F}_2(E_1))) \\
                              & =\cos\theta\mbox{ } \sin(2\alpha-\frac{\pi}{6}).
                            \end{align*} In this case the maximum is attained for $\alpha=\frac{\pi}{3}$, so we can write \\ $$\widetilde{E}_1=\frac{1}{2}d\mathcal{F}_2(E_1)+\frac{\sqrt{3}}{2}d\mathcal{F}_2(E_2)$$ and straightforward computations give us:\begin{align*}
                                                                       &\widetilde{E}_2=\frac{\sqrt{3}}{2}d\mathcal{F}_2(E_1)-\frac{1}{2}d\mathcal{F}_2(E_2), \\
                                                                       &\widetilde{E}_3=d\mathcal{F}_2(E_3),\\
                                                                       &\widetilde{E}_4=-d\mathcal{F}_2(E_4),\\
                                                                       &\widetilde{E}_5=-\frac{1}{2}d\mathcal{F}_2(E_5)-\frac{\sqrt{3}}{2}d\mathcal{F}_2(E_6), \\
         &\widetilde{E}_6=\frac{\sqrt{3}}{2}d\mathcal{F}_2(E_5)-\frac{1}{2}d\mathcal{F}_2(E_6).
                                                                     \end{align*} We have:
\begin{align*}
                                                                                             \widetilde{a}_1 & = \langle P\widetilde{E}_1,\widetilde{E}_3 \rangle/\sin\widetilde\theta\\
                                                                                             &=\langle P\big(\frac{1}{2}d\mathcal{F}_2(E_1)+\frac{\sqrt{3}}{2}d\mathcal{F}_2(E_2)\big), d\mathcal{F}_2(E_3) \rangle/\sin\theta  \\
                                                                                             &=\langle \frac{1}{2} d\mathcal{F}_2\big(-\frac{1}{2}PE_1+\frac{\sqrt{3}}{2}JPE_1\big) +\frac{\sqrt{3}}{2} d\mathcal{F}_2\big(-\frac{1}{2}PE_2+\frac{\sqrt{3}}{2}JPE_2\big), \\ &\hspace{0.6cm}d\mathcal{F}_2(E_3) \rangle/\sin\theta  \\
                                                                                             &=\langle \frac{1}{2}\big(-\frac{1}{2}PE_1+\frac{\sqrt{3}}{2}JPE_1\big) +\frac{\sqrt{3}}{2} \big(-\frac{1}{2}PE_2+\frac{\sqrt{3}}{2}JPE_2\big),E_3 \rangle/\sin\theta   \\
                                                                                             & =\frac{1}{2} a_1-\frac{\sqrt{3}}{2}a_2.
                                                                                           \end{align*} In a similar way we get other $\widetilde {a}_i$.
\end{proof}

\begin{remark}
If $\theta=0$, then $P$ is determined by \begin{align*}
                                             \omega_1 & =a_3^2-a_4^2+a_2^2-a_1^2, \\
                                             \omega_2 & =2(a_3a_4-a_1a_2), \\
                                             \omega_3 & =2(a_1a_3+a_2a_4),
                                           \end{align*} which change as: \begin{align*}
                                                                           &\widehat{\omega}_1=\langle P\widehat{E}_3,\widehat{E}_3\rangle =\omega_1 &\widetilde{\omega}_1=\langle P\widetilde{E}_3,\widetilde{E}_3\rangle=-\frac{1}{2}\omega_1-\frac{\sqrt{3}}{2}\omega_2\\
                                                                           &\widehat{\omega}_2=\langle P\widehat{E}_3,\widehat{E}_4\rangle=-\omega_2 &\widetilde{\omega}_2=\langle P\widetilde{E}_3,\widetilde{E}_4\rangle=-\frac{\sqrt{3}}{2}\omega_1+\frac{1}{2}\omega_2\\
                                                                           &\widehat{\omega}_3=-\langle P\widehat{E}_3,\widehat{E}_5\rangle=-\omega_3 &\widetilde{\omega}_3=-\langle P\widetilde{E}_3,\widetilde{E}_5\rangle=\omega_3.\hspace{50pt}\\
                                                                         \end{align*}
\end{remark}

\section{Case $P\D_1=\D_1$}

Here, we assume that $P\D_1=\D_1$, i.e. $\theta=0$. Recall that $\D_2=Span\{E_3, E_4\}$ and $\D_3=Span\{E_5, E_6\}$. So, in this section we will consider some cases with respect to the position of $P\D_2$. Note that we still have a freedom of rotating the frame of $\D_1$.
\begin{theorem}
  There is no CR submanifold such that $P\D_1=\D_1$ and $P\D_2=\D_3$.
\end{theorem}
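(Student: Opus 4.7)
The plan is to translate the hypothesis into the frame of Lemma~3 and produce a numerical contradiction from two carefully chosen applications of the nearly K\"ahler identity $(\widetilde{\nabla}_X P)Y=\frac{1}{2}(JG(X,PY)+JPG(X,Y))$, taking $Y=E_3$ with $X=E_1$ and $X=E_2$.

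Assuming such an $M$ exists, I first pin down the action of $P$ on the frame. Since $P\mathcal{D}_1=\mathcal{D}_1$ we are in the case $\theta=0$ of Lemma~3, so $PE_1=E_1$, $PE_2=-E_2$, and the action of $P$ on $\mathcal{D}_2\oplus\mathcal{D}_3$ is controlled by the invariants $(\omega_1,\omega_2,\omega_3)$ of the Remark with $\omega_1^2+\omega_2^2+\omega_3^2=1$. Reading the tangential components of $PE_3$ and $PE_4$ off (\ref{Peovi}), the condition $P\mathcal{D}_2=\mathcal{D}_3$ is equivalent to $\omega_1=\omega_2=0$, so $\omega_3=\pm1$. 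Replacing $E_1$ by $-E_1$ if necessary (which flips the signs of $E_2,E_5,E_6$ while preserving the orientation of $\{E_1,E_2,E_3\}$), we may assume $\omega_3=1$, and then
\begin{equation*}
PE_3=-E_5,\quad PE_4=-E_6,\quad PE_5=-E_3,\quad PE_6=-E_4.
\end{equation*}

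Next I apply the defining identity to $Y=E_3$ for $X=E_1,E_2$. Using the table~(\ref{tabela}) for $G$ and the values of $P$ above, the two $JG$-type terms in the formula cancel when $X=E_1$ but reinforce when $X=E_2$, yielding
\begin{equation*}
(\widetilde{\nabla}_{E_1}P)E_3=0,\qquad (\widetilde{\nabla}_{E_2}P)E_3=\tfrac{1}{\sqrt{3}}\,E_3.
\end{equation*}
I then expand each side as $(\widetilde{\nabla}_{E_i}P)E_3=-\widetilde{\nabla}_{E_i}E_5-P\widetilde{\nabla}_{E_i}E_3$ in the frame. For $i=1$, the $E_4$-component of the resulting equation, together with $b_{12}^1=-b_{11}^2=-(h_{13}^3+1/\sqrt{3})$ from Lemma~1, yields $2h_{13}^3+1/\sqrt{3}=0$, so $h_{13}^3=-1/(2\sqrt{3})$. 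For $i=2$, the $E_3$-component yields $2h_{23}^2=1/\sqrt{3}$, so $h_{23}^2=1/(2\sqrt{3})$. These two values are incompatible with the relation $h_{13}^3=h_{23}^2+1/\sqrt{3}$ from Lemma~1, which would demand $h_{13}^3=3/(2\sqrt{3})$. The resulting contradiction proves the theorem.

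The only real obstacle in carrying this out is bookkeeping: one must keep sign conventions in the $G$-, $J$-, and $P$-tables and the dependencies of Lemmas~1 and 2 consistent throughout. Notably, the proof requires no Gauss--Codazzi or curvature argument and touches the normal connection only through the single coefficient $b_{12}^1$, which reflects the strength of the combined constraint $P\mathcal{D}_1=\mathcal{D}_1$ and $P\mathcal{D}_2=\mathcal{D}_3$.
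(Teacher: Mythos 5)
Your proof is correct, and at its core it runs on the same engine as the paper's: evaluate the identity $(\widetilde{\nabla}_X P)Y=\frac{1}{2}(JG(X,PY)+JPG(X,Y))$ on frame vectors and extract incompatible values of second fundamental form coefficients via Lemma~1. I checked the key computations: with $PE_3=-E_5$, $PE_5=-E_3$, $PE_6=-E_4$ one indeed gets $(\widetilde{\nabla}_{E_1}P)E_3=0$ and $(\widetilde{\nabla}_{E_2}P)E_3=\frac{1}{\sqrt{3}}E_3$, the $E_4$-component for $i=1$ gives $h_{13}^3=b_{12}^1=-(h_{13}^3+1/\sqrt{3})$, the $E_3$-component for $i=2$ gives $h_{23}^2=1/(2\sqrt{3})$, and these contradict $h_{13}^3=h_{23}^2+1/\sqrt{3}$. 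Where you genuinely diverge is at the normalization step: by invoking Lemma~3 (equivalently, the symmetry of $P$ together with $PJ=-JP$ and $PG(X,Y)=-G(PX,PY)$, which force $g(PE_3,E_6)=0$ once $PE_1=E_1$), you pin down $PE_3=\pm E_5$ from the outset. The paper instead allows the more general ansatz $PE_3=\cos t\,E_5+\sin t\,E_6$ and must run a case analysis on $\sin t$, $\cos t$ and $h_{11}^1$, including a curvature-tensor computation in the sub-case $\cos t=0$ — a sub-case your normalization shows is vacuous. So your argument is a legitimate and noticeably leaner route to the same contradiction, trading the paper's Gauss-equation branch for one algebraic observation about $P$.
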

\begin{proof}
If we assume that $P\D_1=\D_1$ and $PD_2=D_3$, without loss of generality we can choose $E_1$ such that $PE_1=E_1$ and we know that for $E_3$ we can write  $PE_3=\cos t E_5 + \sin  t E_6$.
Taking $X\in \{E_1, E_2, E_3\}$ and $Y=E_1$ in the second formula in the last line of (\ref{popste}) we get $\Gamma_{11}^2=\Gamma_{21}^2=\Gamma_{31}^2=0$. Similarly, taking $(X,Y)=(E_2,E_1)$ and $(X,Y)=(E_1,E_2)$ and taking the sum, we get $h_{12}^1=0$ and $h_{22}^1=h_{11}^1$. Looking now again when $(X,Y)=(E_1,E_1)$, we get $h_{11}^3=h_{11}^1 \cos t$ and $h_{12}^3=h_{11}^1 \sin t$. From the same equation we also derive $h_{11}^1 \sin t=0$. Now, we consider two cases. First, if we suppose that $\sin t=0$, for the combinations $(X,Y)=(E_1,E_3)$ and $(X,Y)=(E_2,E_1)$ we derive $h_{13}^3=-\frac{1}{2\sqrt{3}}$ and $h_{13}^3=\frac{\sqrt{3}}{2}$. This yields a contradiction. Now we suppose that $h_{11}^1=0$. If $\cos t\neq 0$, we get the same contradiction as in the previous case. If $\cos t=0$, taking $(X,Y)=(E_3,E_1)$ we deduce $h_{13}^1=h_{13}^2=0$ and taking $(X,Y)=(E_3,E_2)$  we find that  $h_{23}^1=0$ and $h_{13}^3=\frac{1}{2\sqrt{3}}$. Computing now $0=\widetilde{\nabla}_{E_1}\widetilde{\nabla}_{E_2}E_3
-\widetilde{\nabla}_{E_2}\widetilde{\nabla}_{E_1}E_3-\widetilde{\nabla}_{[E_1,
E_2]}E_3-R(E_1, E_2)E_3$ we get a contradiction. So, the case that $P\D_1=\D_1$ and $P\D_2=\D_3$ is impossible.
\end{proof}
\begin{remark}
As $P$ is injective and all distributions have the same dimension, the equality  signs in the above theorem can be replaced with inclusions. The same is also true in subsequent theorems.
\end{remark}
\begin{theorem}\label{PD1=D1PD2=D2}
Let $M$ be a $3$-dimensional CR submanifold of $\S^3\times\S^3$ such that $\D_1$ holds $P\D_1=\D_1$ and $P\D_2=\D_2$. Then $M$ is locally congruent to one of following three immersions:
\begin{align*}
 f_1(u,v,t)&=(\cos(\frac{\sqrt{3}}{2}u+\frac{1}{2}v)+i\sin(\frac{\sqrt{3}}{2}u+\frac{1}{2}v),\\
 &A(t)(\cos(\frac{\sqrt{3}}{2}u-\frac{1}{2}v)+i\sin(\frac{\sqrt{3}}{2}u-\frac{1}{2}v))),\\
  f_2(u,v,t)&=(A(t)(\cos(\frac{\sqrt{3}}{2}u-\frac{1}{2}v)+i\sin(\frac{\sqrt{3}}{2}u-\frac{1}{2}v)),\\
  &\cos(\frac{\sqrt{3}}{2}u+\frac{1}{2}v)+i\sin(\frac{\sqrt{3}}{2}u+\frac{1}{2}v)),\\
&\hspace{-17mm}f_3(u,v,t)=((\cos(v)+i\sin(v))\bar{A},(\cos(\frac{\sqrt{3}}{2}u-\frac{1}{2}v)-i\sin(\frac{\sqrt{3}}{2}u-\frac{1}{2}v))\bar{A}),
\end{align*}
where
 $A(t)=a_1(t)+a_2(t)j$ where $a_1(t),a_2(t)\in \C$ and
 \begin{align*}
   a_1'(t) & =-\frac{\sqrt{3}}{2}a_2(t)e^{-if(t)} \\
   a_2'(t) & =\frac{\sqrt{3}}{2}a_1(t)e^{if(t)}
 \end{align*}
for some differentiable function $f$.
\end{theorem}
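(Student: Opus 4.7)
The plan is to exploit the hypothesis algebraically first, reducing $P$ to a normal form, and then to integrate the resulting frame equations in quaternion coordinates.

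First, I would use the hypothesis to simplify formula (\ref{Peovi}). With $\theta=0$ one has $PE_1=E_1$ and $PE_2=-E_2$, and the coefficients of $E_5,E_6$ in $PE_3$ and $PE_4$ become $-2(a_1a_3+a_2a_4)$ and $\mp 2(a_2a_3-a_1a_4)\cdot 0$ together with $-2(a_1a_3+a_2a_4)$. The condition $P\mathcal D_2=\mathcal D_2$ therefore collapses to the single algebraic constraint $a_1a_3+a_2a_4=0$, which combined with $\sum a_i^2=1$ leaves a two-parameter family. Now Lemma \ref{Lemma4} gives a finite group action (generated by $\mathcal F_1$ and $\mathcal F_2$) on the tuple $(a_1,a_2,a_3,a_4)$ preserving the conditions $\theta=0$ and $a_1a_3+a_2a_4=0$; a direct orbit analysis shows that after possibly composing with $\mathcal F_1,\mathcal F_2$ and an $\mathcal F_{abc}$ one may assume the tuple takes one of three specific forms, which will match the three immersions in the statement.

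Second, I would insert each normal form of $P$ into the identity $(\tilde\nabla_X P)Y=\tfrac12\bigl(JG(X,PY)+JPG(X,Y)\bigr)$ from (\ref{popste}), expand the right-hand side using the multiplication table (\ref{tabela}) for $G$, and compare components in the frame $E_1,\dots,E_6$. Each choice of $X\in\{E_1,E_2,E_3\}$ and $Y\in\{E_1,\dots,E_6\}$ produces a batch of scalar equations in the unknowns $E_i(a_j)$, $\Gamma_{ij}^k$, $h_{ij}^k$, $b_{ij}^k$. Feeding these into Lemma \ref{lemma1} and Lemma 2 (which already express most $b$'s and many $\Gamma$'s in terms of a small set of $h$'s) should force the vanishing of $\Gamma_{i1}^2$ for $i=1,2,3$ and of most of the shape-operator components; this is analogous to the argument at the start of the previous theorem's proof. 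The surviving freedom is a single function $f$ along $\mathcal D_1^\perp$, which will be the gauge function appearing in the stated ODE for $A(t)$.

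Third, I would integrate. Using (\ref{veza1}) to convert $\tilde\nabla$ to the Euclidean connection, and then (\ref{koneksija}) to write everything in $\mathbb R^8=\mathbb H\times\mathbb H$, the frame equations become a linear ODE system for $f:M\to \mathbb S^3\times\mathbb S^3$. The $u$- and $v$-directions correspond to commuting rotations by constant imaginary quaternions with rational coefficients $\tfrac{\sqrt 3}{2}$ and $\pm\tfrac12$, yielding the explicit factors $\cos(\tfrac{\sqrt 3}{2}u\pm\tfrac12 v)+i\sin(\tfrac{\sqrt 3}{2}u\pm\tfrac12 v)$ seen in the statement. The remaining $t$-direction contains the only nonconstant piece and reduces, after a standard ansatz $A(t)=a_1(t)+a_2(t)j$, to the stated pair of ODEs $a_1'=-\tfrac{\sqrt 3}{2}a_2 e^{-if(t)}$, $a_2'=\tfrac{\sqrt 3}{2}a_1 e^{if(t)}$. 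The three normal forms obtained in the first step produce $f_1,f_2,f_3$ respectively; that no further cases occur follows from the orbit count of $\mathcal F_1,\mathcal F_2$ on the constraint locus.

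The main obstacle is the bookkeeping in the second step: the identity for $\tilde\nabla P$ produces a large overdetermined system, and care is needed to choose the right order of contractions so that the algebraic constraints close without branching into spurious cases. Gauss's equation with the curvature (\ref{krivina}) should be used as a consistency check to exclude any residual branches, analogously to how the previous (nonexistence) theorem was closed. Once the system is shown to be consistent and reduces to the single gauge function $f(t)$, the final integration in $\mathbb H\times\mathbb H$ is routine, and identifying the answer with the three families $f_1,f_2,f_3$ only requires matching initial conditions modulo the isometries $\mathcal F_{abc},\mathcal F_1,\mathcal F_2$.
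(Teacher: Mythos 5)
Your second and third steps follow the same route as the paper (pin down the connection and second fundamental form via the identity $(\tilde\nabla_XP)Y=\tfrac12(JG(X,PY)+JPG(X,Y))$, show $[E_i,E_j]=0$, and integrate in quaternionic coordinates), but your first step contains a genuine gap. With $\theta=0$ the action of $P$ on $\D_2\oplus\D_3$ is governed by the triple $(\omega_1,\omega_2,\omega_3)$ of the Remark, and $P\D_2=\D_2$ is exactly $\omega_3=0$; this leaves a genuinely one-parameter family $\omega_1=-\cos2\phi$, $\omega_2=-\sin2\phi$. The group generated by $\mathcal F_1$ and $\mathcal F_2$ acts on this circle through a \emph{finite} group (the Remark exhibits it as two reflections in the $(\omega_1,\omega_2)$-plane, generating a dihedral group of order six), and $\mathcal F_{abc}$ preserves $P$ outright, so every orbit on the circle of admissible $\phi$ is finite and an ``orbit count'' cannot reduce the continuum to three normal forms. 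The reduction to three values is not a normalization at all: it is forced by the compatibility condition itself. In the paper, evaluating $(\tilde\nabla_{E_3}P)E_1$ against the right-hand side of (\ref{popste}) produces the two scalar equations $\cos2\phi+\cos4\phi=0$ and $\sin4\phi-\sin2\phi=0$, whose common solutions are precisely $\phi\in\{-\tfrac{\pi}{6}+k\pi,\ \tfrac{\pi}{6}+k\pi,\ \tfrac{\pi}{2}+k\pi\}$. If you fix three representatives first and only then run the $\tilde\nabla P$ computation on them, you verify consistency of those three cases but never exclude the remaining circle of values of $\phi$, so the classification statement (that \emph{every} such $M$ is congruent to $f_1$, $f_2$ or $f_3$) is not established.

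The repair is to reverse the order: impose only $\theta=0$ and $\omega_3=0$ with $\phi$ an unknown function, run the $\tilde\nabla P$ identity over all pairs $(X,Y)$ with $X$ tangent to obtain both the vanishing of $\Gamma_{11}^2,\Gamma_{21}^2,\Gamma_{31}^2$ and of the relevant $h$'s \emph{and} the quantization of $\phi$, and only at the very end use $\mathcal F_1$ and $\mathcal F_1\circ\mathcal F_2\circ\mathcal F_1$ to transport the explicitly integrated case $\phi=-\tfrac{\pi}{6}$ to the other two values, which is what produces $f_2$ and $f_3$. With that correction, your remaining outline (Gauss equation to kill $h_{12}^1$ and to show that the surviving coefficient $h_{33}^3$ depends only on $t$, flat coordinates $\partial_u,\partial_v,\partial_t$, explicit circles in the $u,v$ directions, and the ODE system for $A(t)=a_1(t)+a_2(t)j$ with gauge function $f$ satisfying $f'=-2h_{33}^3$) matches the paper's argument.
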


\begin{proof}
  We suppose that $P\D_1=\D_1$ and $P\D_2=\D_2$, so we can choose $E_1\in \D_1$ such that $PE_1=E_1$. So, $\theta=0$ and we write $\omega_1=-\cos(2\phi)$, $\omega_2=-\sin(2\phi)$ and $\omega_3=0$. For this values almost product structure $P$ is defined on following way:
  \begin{align*}
  & PE_1=E_1,\hspace{3mm}PE_2=-E_2,\hspace{3mm}PE_3=-\cos(2\phi)E_3-\sin(2\phi)E_4,\\
  &PE_4=-\sin(2\phi)E_3+\cos(2\phi)E_4,\hspace{3mm}PE_5=\cos(2\phi)E_5+\sin(2\phi)E_6,\\
  &PE_6=\sin(2\phi)E_5-\cos(2\phi)E_6.
  \end{align*}
Taking combinations $(X,Y)\in \{(E_1,E_1), (E_1,E_2), (E_2,E_1), (E_3,E_1),\\ (E_1,E_3), (E_2,E_3)\}$ in the second equation in the last line of (\ref{popste}) we get
$$\Gamma_{11}^2= \Gamma_{21}^2=\Gamma_{31}^2=0, h_{12}^3=h_{11}^3=0, h_{13}^2=-\frac{\sin(4\phi)}{\sqrt{3}}, h_{23}^2=\frac{2\cos(4\phi)-1}{2\sqrt{3}}.$$ Taking again $(X,Y)= (E_3,E_1)$ in the same equation of (\ref{popste}), we get the following conditions:
\begin{align*}
& \cos(2\phi)+\cos(4\phi)=0,\hspace{10mm} -\sin(2\phi)+\sin(4\phi)=0.
\end{align*}
We can now conclude that the possibilities for $\phi$ are $\{-\frac{\pi}{6}+k\pi,\mbox{  } \frac{\pi}{6}+k\pi,\mbox{  } \frac{\pi}{2}+k\pi\}$, $k\in \R$.

Now, we will consider the case $\phi_1=-\frac{\pi}{6}+k\pi$. If we take $(X,Y)\in \{(E_1,E_3), (E_2,E_3), (E_3,E_3)\}$ in (\ref{popste}) we obtain: $$h_{11}^1=\sqrt{3}h_{12}^1,\mbox{ } h_{22}^1=\frac{1}{\sqrt{3}}h_{12}^1,\mbox{ }  h_{23}^1=0,\mbox{ }  h_{13}^1=0,\mbox{ }  h_{33}^1=0,\mbox{ }  h_{33}^2=\sqrt{3}h_{33}^3.$$
Taking $X=E_1$, $Y=E_2$ and $Z=E_1$ in the Gauss equation we get $h_{12}^1=0$ and if we use again the Gauss equation for $(X,Y,Z)\in\{(E_1,E_3,E_3),(E_2,E_3,E_3)\}$ we get that the derivatives $E_1(h_{33}^3)=E_2(h_{33}^3)\\=0$. Now, straightforward computations give us $$[E_1,E_2]=0,\mbox{   }[E_1,E_3]=0,\mbox{   }[E_2,E_3]=0.$$
So, the vector fields $E_1, E_2, E_3$, i.e. $df(E_1), df(E_2), df(E_3)$ correspond to coordinate vector fields $\partial_u, \partial_v, \partial_t$. The immersion $f: M\rightarrow S^3\times S^3$ can be written as pair of maps $(p,q)$, with $p: M\rightarrow S^3$ and $q: M\rightarrow S^3$. Also, we will use $df(E_i)=(p\alpha_i, q\beta_i)$, where $\alpha_i,$ $\beta_i$ are imaginary quaternion functions.
We can express projections of $df(E_i)$ for $i=1,2,3$ on the tangent space of both spheres using (\ref{Qp}) and we get:\begin{align}\label{pPD1=PD1PD2=PD2}
& p_v=\frac{1}{\sqrt{3}}p_u, \hspace{10mm} p_t=0, \hspace{10mm}q_v=-\frac{1}{\sqrt{3}}q_u.
\end{align}
Now, we obtain the following Euclidean covariant derivatives:
\begin{align}\label{kovarijantniPd1=D1PD2=D2}
&\nabla^E_{\partial_u}dp(E_i)=0\hspace{6mm}\nabla^E_{\partial_v}dp(E_i)=0\hspace{6mm}\nabla^E_{\partial_t}dp(E_i)=0\hspace{6mm}i=1,2,3\nonumber\\
&\nabla^E_{\partial_u}dq(E_1)=0\hspace{6mm}\nabla^E_{\partial_v}dq(E_1)=0\hspace{6mm}\nabla^E_{\partial_t}dq(E_1)= \frac{3}{4}E_5+\frac{\sqrt{3}}{4}E_6 \nonumber  \\
&\nabla^E_{\partial_u}dq(E_2)=0\hspace{6mm}\nabla^E_{\partial_v}dq(E_2)=0\hspace{6mm}\nabla^E_{\partial_t}dq(E_2)= -\frac{\sqrt{3}}{4}E_5-\frac{1}{4}E_6 \nonumber \\
& \nabla^E_{\partial_u}dq(E_3)=\frac{3}{4}E_5+\frac{\sqrt{3}}{4}E_6\hspace{17mm}\nabla^E_{\partial_v}dq(E_3)=-\frac{\sqrt{3}}{4}E_5-\frac{1}{4}E_6\nonumber\\
&\nabla^E_{\partial_t}dq(E_3)=\sqrt{3}h_{33}^3E_5+h_{33}^3E_6.
\end{align}
Note that the coefficient $h_{33}^3$ depends only on the variable $t$.
We can calculate the scalar product using relation (\ref{vezica}) and we have:
\begin{align}\label{duzinePD1=D1PD2=D2}
& \|\alpha_1\|=\|\beta_1\|=\|\beta_3\|=\frac{\sqrt{3}}{2}, \hspace{3mm}\|\alpha_2\|=\|\beta_2\|=\frac{1}{2}, \hspace{3mm}\|\alpha_3\|=0,\nonumber  \\
&\langle\alpha_1,\alpha_2\rangle=\frac{\sqrt{3}}{4},\hspace{3mm} \langle\beta_1,\beta_2\rangle=-\frac{\sqrt{3}}{4},\hspace{3mm} \langle\beta_1,\beta_3\rangle=\langle\beta_2,\beta_3\rangle=0.
\end{align}
So, we obtain $p_{uu}=-\frac{3}{4}p$, this together with (\ref{pPD1=PD1PD2=PD2}) gives us
\begin{align*}
& p(u,v)=A\cos(\frac{\sqrt{3}}{2}u+\frac{1}{2}v)+B\sin(\frac{\sqrt{3}}{2}u+\frac{1}{2}v)
\end{align*}
where $A, B\in \H$ are constants and $\|A\|=\|B\|=1$, $\langle A,B\rangle=0$. By a rotation of the space $\R^4$ we can suppose that the immersion is
\begin{align*}
& p(u,v)=\cos(\frac{\sqrt{3}}{2}u+\frac{1}{2}v)+i\sin(\frac{\sqrt{3}}{2}u+\frac{1}{2}v).
\end{align*}
For $q$ we also have $q_{uu}=-\frac{3}{4}q$ and using (\ref{pPD1=PD1PD2=PD2}) we obtain that $q$ has the form
\begin{align}\label{qPD1=D1PD2=D2}
& q(u,v,t)=\widetilde{A}(t)\cos(\frac{\sqrt{3}}{2}u-\frac{1}{2}v)+ \widetilde{B}(t) \sin(\frac{\sqrt{3}}{2}u-\frac{1}{2}v)
\end{align}
where $\widetilde{A}(t) ,\widetilde{B}(t)\in \H$ and $\|\widetilde{A}(t)\|=\|\widetilde{B}(t)\|=1$, $\langle \widetilde{A}(t),\widetilde{B}(t)\rangle=0$. Straightforward computation gives us
\begin{align}\label{alfePD1=D1PD2=D2}
& \alpha_1=\bar{p}p_u=\frac{\sqrt{3}}{2}i,\hspace{5mm}\alpha_2=\bar{p}p_v=\frac{1}{2}i,\hspace{5mm}\alpha_3=0.
\end{align}
We suppose that $PE_1=E_1$ and $PE_2=-E_2$, i.e. we have
\begin{align}\label{betePD1=D1PD2=D2}
 &\beta_1=\alpha_1=\frac{\sqrt{3}}{2}i,\hspace{5mm}\beta_2=-\alpha_2=-\frac{1}{2}i.
\end{align}
$E_1,$$E_2,$$E_3$ are coordinate vector fields, so using formula $$(0,q_{ij})=-\langle\beta_i,\beta_j\rangle (0,q)+\nabla^E_{E_i}dq(E_j),\mbox{  }i,j\in\{1,2,3\}.$$
from (\ref{kovarijantniPd1=D1PD2=D2}) we get $$\partial_u(\beta_3)=\sqrt{3}i\times \beta_3,\mbox{  }\partial_v(\beta_3)=i\times \beta_3,\mbox{  }\partial_t(\beta_3)=-2h_{33}^3i\times \beta_3.$$
From (\ref{duzinePD1=D1PD2=D2}) we see that $\beta_3\in Im \H$ is orthogonal to $\beta_1$ and $\beta_2$, with length $\frac{\sqrt{3}}{2}$ and we can write $\beta_3=\frac{\sqrt{3}}{2}\cos(\gamma) j+\frac{\sqrt{3}}{2}\sin(\gamma) k$, where $\gamma$ is a differentiable function and a straightforward computation gives us $$\partial_u(\gamma)=-\sqrt{3},\mbox{  }\partial_v(\gamma)=1,\mbox{  }\partial_t(\gamma)=-2h_{33}^3.$$
We can express $\beta_3$ in the following way $$\beta_3=\frac{\sqrt{3}}{2}\cos(-\sqrt{3}u+v+f(t)) j+\frac{\sqrt{3}}{2}\sin(-\sqrt{3}u+v+f(t)) k,$$ where $f(t)$ is differentiable function such that $f'(t)=-2h_{33}^3$.
From $q_t=q\beta_3$ we get conditions for $\widetilde{A}(t)$ and $\widetilde{B}(t)$:
\begin{align*}
& \widetilde{A}'(t)=\frac{\sqrt{3}}{4}\widetilde{A}(t)je^{-if(t)}-\frac{\sqrt{3}}{4}\widetilde{B}(t)ke^{-if(t)}, \\
&\widetilde{B}'(t)=-\frac{\sqrt{3}}{4}\widetilde{A}(t)ke^{-if(t)}-\frac{\sqrt{3}}{4}\widetilde{B}(t)je^{-if(t)}, \\
&0=\frac{\sqrt{3}}{4}\widetilde{A}(t)je^{-if(t)}+\frac{\sqrt{3}}{4}\widetilde{B}(t)ke^{-if(t)}, \\
&0=\frac{\sqrt{3}}{4}\widetilde{A}(t)ke^{-if(t)}-\frac{\sqrt{3}}{4}\widetilde{B}(t)je^{-if(t)}.
\end{align*}
These equations  reduce to:
\begin{align}\label{AiBopetPD1=D1PD2=D2}
&\widetilde{A}'(t)=\frac{\sqrt{3}}{2}\widetilde{A}(t)je^{-if(t)}, \\
&\widetilde{B}(t)= \widetilde{A}(t)i.
\end{align}
If we use the notation $\widetilde{A}(t)=a_1(t)+a_2(t)j$, where $a_1(t),a_2(t)\in \C$, from (\ref{AiBopetPD1=D1PD2=D2}) we obtain the relations:
\begin{align*}
   a_1'(t) & =-\frac{\sqrt{3}}{2}a_2(t)e^{-if(t)} \\
   a_2'(t) & =\frac{\sqrt{3}}{2}a_1(t)e^{if(t)}.
\end{align*} From this moment on, we will use this notation,  $A=\widetilde{A}$ and $B=\widetilde{B}$. Finally, the immersion is
\begin{align*}
  f(u,v,t) & =(\cos(\frac{\sqrt{3}}{2}u+\frac{1}{2}v)+i\sin(\frac{\sqrt{3}}{2}u+\frac{1}{2}v),\\
 &A(t)(\cos(\frac{\sqrt{3}}{2}u-\frac{1}{2}v)+i\sin(\frac{\sqrt{3}}{2}u-\frac{1}{2}v))),
\end{align*} where $ A(t)$ satisfies previous conditions.
Two other cases when $\phi_2=\frac{\pi}{6}+k\pi$ and $\phi_2=\frac{\pi}{2}+k\pi$, $k\in \R$ we can obtain by using angle functions. From $PE_3=-\cos(2\phi)E_3-\sin(2\phi)E_4$ we obtain that $\theta=\phi+\frac{1}{2}\pi$. For $\phi_1=-\frac{\pi}{6}$, $\phi_2=\frac{\pi}{6}$, $\phi_3=\frac{\pi}{2}$ corresponding angles are $\theta_1=\frac{\pi}{3}$, $\theta_2=\frac{2\pi}{3}$, $\theta_3=\pi$. $\theta_2=\pi-\theta_1$ and we can conclude that the immersion $f_2(u,v,t)=\mathcal F_1(p,q)=(q,p)$ corresponds to the angle $\phi_2=\frac{\pi}{6}$. On the other hand, for the immersion \begin{align*}
&f_3(u,v,t)=\mathcal F_1\circ \mathcal F_2\circ\mathcal F_1(p,q)=(p\bar{q},\bar{q})\\
&=((\cos(v)+i\sin(v))\bar{A},\cos(\frac{\sqrt{3}}{2}u-\frac{1}{2}v)-i\sin(\frac{\sqrt{3}}{2}u-\frac{1}{2}v)\bar{A})
\end{align*} corresponding angle is $\theta_3=\pi$. This ends our proof.
\end{proof}

\section{Case $P\D_1\perp \D_1$}

In this section we suppose that $P\D_1\perp \D_1$. First we get a condition when $\D_1$ is not integrable and then we obtain a complete classification of the submanifolds of this type. Moreover, we get an example of a submanifold when the distribution $\D_1$  is integrable and $P\D_1=\D_2$.

\begin{theorem}
   If we suppose that $P\D_1\perp \D_1$, then $\D_1$ is not integrable if and only if $P\D_1=\D_3$.
\end{theorem}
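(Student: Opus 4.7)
The plan is to translate both sides of the equivalence into explicit algebraic conditions on the frame data and then connect them via the structural identity for $\tilde\nabla P$ from \eqref{popste}. The hypothesis $P\D_1\perp\D_1$ forces $\cos\theta=0$, so the previous lemma specialises to
\[PE_1=a_1E_3+a_2E_4+a_3E_5+a_4E_6,\qquad PE_2=a_2E_3-a_1E_4-a_4E_5+a_3E_6,\]
with $\sum a_i^2=1$, and $P\D_1=\D_3$ becomes simply $a_1=a_2=0$. On the other side, since the tangent space is $\operatorname{span}\{E_1,E_2,E_3\}$, the distribution $\D_1$ is non-integrable iff $g([E_1,E_2],E_3)\neq0$; expanding the bracket and invoking Lemma~\ref{lemma1} this equals $\Gamma_{12}^3-\Gamma_{21}^3=-(h_{11}^1+h_{22}^1)$. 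The theorem thus reduces to the equivalence $h_{11}^1+h_{22}^1\neq0\Leftrightarrow a_1=a_2=0$.

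The core computation is to apply $(\tilde\nabla_X P)Y=\tfrac12(JG(X,PY)+JPG(X,Y))$ to the four pairs $(X,Y)\in\{E_1,E_2\}^2$. Since $G(E_1,E_2)=0$ the second term drops for off-diagonal pairs, and the $G$-table \eqref{tabela} gives
\[JG(E_1,PE_1)=JG(E_2,PE_2)=\tfrac{1}{\sqrt3}(a_4E_3-a_3E_4+a_2E_5-a_1E_6).\]
Pairing the four identities with $E_3$ and $E_4$, expanding the left-hand sides with Lemma~\ref{lemma1} (especially the shift $h_{13}^3-h_{23}^2=\tfrac{1}{\sqrt3}$), and combining them (adding the two diagonal equations, subtracting the two off-diagonal ones), one obtains two master relations
\begin{align*}
-2a_3a_4\,(h_{11}^1+h_{22}^1)&=\tfrac{2a_4}{\sqrt3}+L_3,\\
(a_3^2-a_4^2)\,(h_{11}^1+h_{22}^1)&=\tfrac{2a_3}{\sqrt3}+L_4,
\end{align*}
in which $L_3,L_4$ are linear in $a_1,a_2$, their tangential derivatives, and certain connection symbols, and in particular vanish identically on any open set where $a_1\equiv a_2\equiv 0$.

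The forward direction then follows at once: substituting $a_1=a_2=0$ kills $L_3$ and $L_4$, and a short manipulation (multiplying the first relation by $a_3$, the second by $a_4$ and subtracting gives $a_4(a_4^2-3a_3^2)(h_{11}^1+h_{22}^1)=0$, which combined with the original relations and $a_3^2+a_4^2=1$ rules out $h_{11}^1+h_{22}^1=0$) shows $h_{11}^1+h_{22}^1\neq0$ in every sub-case, so $\D_1$ is not integrable. For the converse the main obstacle is that the two master relations on their own do not force $a_1=a_2=0$: the $L_i$ could in principle conspire with the $\tfrac{2a_i}{\sqrt3}$ terms. My plan for this direction is to use the rotational freedom in $\D_1$ (available here precisely because $\cos\theta=0$, as noted after the lemma) to normalise $a_2=0$, and then harvest further scalar equations by projecting the $P$-derivative identity onto $E_5,E_6$ and invoking the self-adjoint symmetry $g((\tilde\nabla_X P)Y,Z)=g((\tilde\nabla_X P)Z,Y)$. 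These additional relations should eliminate the remaining derivatives $E_i(a_1)$ and unknown connection symbols and collapse the system to an identity of the shape $a_1\cdot(\text{nonzero factor})\cdot(h_{11}^1+h_{22}^1)=0$, forcing $h_{11}^1+h_{22}^1=0$ whenever $a_1\neq0$. Cleanly separating the contributions of the nearly K\"ahler constants $\tfrac{1}{\sqrt3}$ from those of the variable $a_i$ throughout this bookkeeping is the step I expect to be the most delicate.
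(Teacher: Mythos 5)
Your reduction of the statement to the equivalence $h_{11}^1+h_{22}^1\neq 0\Leftrightarrow a_1=a_2=0$ is correct and is exactly the paper's starting point, and your treatment of the implication $P\D_1=\D_3\Rightarrow\D_1$ not integrable is sound: with $a_1=a_2=0$ your two master relations force $a_3=a_4=0$ whenever $h_{11}^1+h_{22}^1=0$, contradicting $\sum a_i^2=1$. (I checked the $E_4$-projection of the summed diagonal identities and the constant $\tfrac{2a_3}{\sqrt3}$ does come out as you claim, after the $b$- and $h$-terms cancel via Lemma~\ref{lemma1}; your relations are also consistent with the values $a_3=\cos t$, $a_4=\sin t$, $t=k\pi/3$, $h_{11}^1+h_{22}^1=\tfrac{2\cos 3t}{\sqrt3}$ found later in the paper.) This is in fact lighter than the paper's argument for that direction, which assumes integrability, extracts $h_{11}^3=h_{12}^3=0$ from the $(E_1,E_1)$ identity, invokes a computation of $\widetilde{R}(E_1,E_2)E_6$ to get $h_{11}^1=h_{12}^1=0$, and only then reaches a contradiction from the $(E_2,E_1)$ identity.

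The other implication, however, is a genuine gap: what you offer is a plan (``should eliminate\dots'', ``I expect\dots''), not a proof, and the obstacle you run into is created by your own choice of projections. Pairing $(\widetilde{\nabla}_XP)Y$ with $E_3$ and $E_4$ brings in the derivatives $E_i(a_j)$ and the symbols $\Gamma_{i1}^2$, $b_{ij}^k$, which is why your $L_3$, $L_4$ refuse to go away when $a_1,a_2$ are merely unknown rather than identically zero, and taking sums and differences of the diagonal and off-diagonal identities destroys precisely the information you need. The paper's route is to pair the identity with the \emph{tangential} directions $E_1$, $E_2$ instead (equivalently, to read off the $\D_1$-components after applying $J$): since $g(PE_1,E_1)=g(PE_1,E_2)=0$ identically when $\cos\theta=0$, metric compatibility removes all derivatives of the $a_i$, and the identities for $(X,Y)=(E_1,E_2)$ and $(X,Y)=(E_2,E_2)$ become purely algebraic linear relations in $a_1,\dots,a_4$ differing only in the single coefficient $-h_{11}^1$ versus $h_{22}^1$ of $a_1$ (respectively of $a_2$ in the other component). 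Subtracting gives at once $(h_{11}^1+h_{22}^1)a_1=0$ and $(h_{11}^1+h_{22}^1)a_2=0$, hence $a_1=a_2=0$ under non-integrability. So the missing direction has a short fix, but as written your proposal does not establish it, and the auxiliary normalisation $a_2=0$ plus the self-adjointness of $\widetilde{\nabla}P$ that you propose is not shown to close the system.
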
\label{PD1=D3}
\begin{proof}
Since $P\D_1\perp \D_1$, we have that $\cos \theta=0, \sin \theta=1 $ in (\ref{Peovi}). If we suppose that $\D_1$ is not integrable, from (\ref{lemma1}) we obtain $h_{11}^1\neq -h_{22}^1$.
Taking $X\in \{E_1, E_2\}$ and $Y=E_2$ in the second equation in the last line of (\ref{popste}) we obtain the equations
\begin{align*}
-h_{11}^1 a_1 + h_{12}^1 a_2 + h_{11}^3 a_3 + h_{12}^3 a_4=0\\
h_{22}^1 a_1 + h_{12}^1 a_2 + h_{11}^3 a_3 + h_{12}^3 a_4=0\\
h_{12}^1 a_1 + h_{11}^1 a_2 - h_{12}^3 a_3 + h_{11}^3 a_4=0\\
h_{12}^1 a_1 - h_{22}^1 a_2 - h_{12}^3 a_3 + h_{11}^3 a_4=0.
\end{align*}
The first two equations reduce to $(h_{11}^1 + h_{22}^1)a_1=0$, so $a_1=0.$ Analo-\\gously, from the third and fourth equation we get $(h_{11}^1 + h_{22}^1)a_2=0$, so $a_2=0$. Hence we can conclude from (\ref{Peovi}) that $P\D_1=\D_3$.\newline
Conversely if we  suppose that $P\D_1=\D_3$ we have $a_1=a_2=0$ in (\ref{Peovi}), so we can write $a_3=\cos t$ and $a_4=\sin t$. In order to obtain a contradiction we can assume that $\D_1$ is integrable, so we have $h_{22}^1= - h_{11}^1$. Taking $X=Y=E_1$ in the second equation in last line of (\ref{popste}) we obtain $ h_{11}^3 \cos t +  h_{12}^3 \sin t=0 \mbox{ and }  h_{12}^3\cos t -  h_{11}^3 \sin t=0$, so, $ h_{11}^3= h_{12}^3=0$. From computing $\widetilde{R}(E_1,E_2)E_6$ in two different ways, we get $h_{12}^1=h_{11}^1=0$.
If we use this in the same equation of (\ref{popste}) when $X=Y=E_1$, we obtain the equations
$$(\sqrt{3}+ 2h_{23}^2) \cos t -2  h_{13}^2 \sin t=0 \mbox{ and }  2h_{13}^2\cos t +  (\sqrt{3}+ 2h_{23}^2) \sin t=0,$$
so, $ h_{13}^2=0$ and $h_{23}^2=-\frac{\sqrt{3}}{2}$. Now taking $(X,Y)=(E_2,E_1)$ in (\ref{popste}) we get that $\cos t$ and $\sin t$ are equal $0$ at the same time, which is impossible.
\end{proof}

\begin{theorem}\label{PD1=D2}
Let $M$ be a $3$-dimensional CR submanifold of $\S^3\times\S^3$ such that $P\D_1=\D_2$. Then $M$ is locally congruent to immersion
\begin{align*}
  f(u,v,t) & =(A(\cos(\sqrt{\frac{3}{2}}u-\frac{1}{\sqrt{2}}v)+\sin\big(\sqrt{\frac{3}{2}}u-\frac{1}{\sqrt{2}}v\big)i\\
   &\hspace{-10mm}+\sqrt{3}\sin\big(\sqrt{\frac{2}{3}}t+\frac{1}{\sqrt{6}}u+\frac{1}{\sqrt{2}}v\big)+\sqrt{3}\cos\big(\sqrt{\frac{2}{3}}t+\frac{1}{\sqrt{6}}u+\frac{1}{\sqrt{2}}v\big)k),\\
   &E(\cos\big(\sqrt{\frac{3}{2}}u+\frac{1}{\sqrt{2}}v\big)+\sin\big(\sqrt{\frac{3}{2}}u+\frac{1}{\sqrt{2}}v\big)i\\
   &\hspace{-11mm}-\sqrt{3}\sin\big(\sqrt{\frac{2}{3}}t+\frac{1}{\sqrt{6}}u-\frac{1}{\sqrt{2}}v\big)j-\sqrt{3}\cos\big(\sqrt{\frac{2}{3}}t+\frac{1}{\sqrt{6}}u-\frac{1}{\sqrt{2}}v\big)k))
\end{align*}
where $A,E\in\H$ and $\|A\|=\|E\|=\frac{1}{2}$.
\end{theorem}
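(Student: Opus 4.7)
The proof should follow the template established for Theorem \ref{PD1=D1PD2=D2}. Since $P\D_1=\D_2$, the normal form (\ref{Peovi}) is evaluated at $\cos\theta=0$, $\sin\theta=1$, and by the remark following Lemma 3 the freedom of rotating $E_1$ inside $\D_1$ is regained. I would use this rotation freedom to arrange $PE_1=E_3$, i.e.\ $a_1=1$, $a_2=a_3=a_4=0$; indeed under a rotation by $\varphi$ one checks that the coefficient of $E_3$ in $P\widetilde E_1$ is $a_1\cos\varphi+a_2\sin\varphi$ and of $E_4$ is $a_2\cos\varphi-a_1\sin\varphi$, so $\varphi$ can always be chosen to kill the $E_4$ component. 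Substituting into (\ref{Peovi}) one then obtains the very clean action $PE_1=E_3$, $PE_2=-E_4$, $PE_3=E_1$, $PE_4=-E_2$, $PE_5=E_5$, $PE_6=-E_6$, which will drive all subsequent computations.

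Next I would systematically exploit the identity $(\widetilde\nabla_X P)Y=\tfrac12\bigl(JG(X,PY)+JPG(X,Y)\bigr)$ from (\ref{popste}) with $X,Y$ ranging through the tangent frame $\{E_1,E_2,E_3\}$. Expanding $\widetilde\nabla_X(PY)-P\widetilde\nabla_X Y$ in the basis and comparing components against the right-hand side, computed from the $G$-table (\ref{tabela}), yields a linear system in the coefficients $\Gamma_{ij}^k$, $h_{ij}^k$, $b_{ij}^k$, already constrained by Lemmas 1 and 2. Theorem \ref{PD1=D3} guarantees that $\D_1$ is integrable here (since $P\D_1\ne\D_3$), so $h_{11}^1+h_{22}^1=0$, which should combine with the $P$-equations to force explicit numerical values for all the connection and second fundamental form components, leaving essentially no free parameters.

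Once the frame coefficients are fixed, computing the brackets $[E_i,E_j]$ should produce commuting vector fields after a trivial rescaling, yielding local coordinates $(u,v,t)$ with $df(\partial_u)$, $df(\partial_v)$, $df(\partial_t)$ proportional to $E_1,E_2,E_3$. Writing $f=(p,q)$ and using (\ref{Q})--(\ref{Qp}) to separate each $df(E_i)=(p\alpha_i,q\beta_i)$ into its two $\S^3$ components, I would read off first-order relations among the partial derivatives $p_u,p_v,p_t$ and $q_u,q_v,q_t$. Note that $PE_1=E_3$ together with the formula $P(p\alpha,q\beta)=(p\beta,q\alpha)$ directly couples $\alpha_1$ with $\beta_3$ and $\beta_1$ with $\alpha_3$, which is what ultimately mixes the $i$-dependence with the $j,k$-dependence visible in the target immersion.

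Finally, I would differentiate once more, using (\ref{veza1}) to translate $\widetilde\nabla$ into $\nabla^E$ and (\ref{koneksija}) to recognise $p_{ij},q_{ij}$ in terms of $p\alpha_j\alpha_i+pE_j(\alpha_i)$. This should give decoupled harmonic oscillator equations for $p$ and $q$ with frequencies determined by the norms $\|\alpha_i\|,\|\beta_i\|$, which produces the square-root factors $\sqrt{3/2},\tfrac1{\sqrt2},\sqrt{2/3},\tfrac1{\sqrt6}$ appearing in the stated immersion. Normalising the integration constants by an isometry $\mathcal F_{abc}$ (which preserves both $J$ and $P$ and hence does not alter the CR structure) should match the answer exactly. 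The principal obstacle will be the bookkeeping in the second step: because $P$ now sends $\D_1$ into $\D_2$, the matrix of $P$ in the frame is genuinely off-diagonal and many more coefficients participate nontrivially in the $(\widetilde\nabla P)$-equations than in the previous theorem, so the algebra---rather than any conceptual issue---will be where the work lies.
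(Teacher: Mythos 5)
Your proposal follows the paper's proof essentially step for step: the same normalization $PE_1=E_3$ (with the induced action $PE_2=-E_4$, $PE_3=E_1$, $PE_4=-E_2$, $PE_5=E_5$, $PE_6=-E_6$, which is indeed what (\ref{Peovi}) gives), the same systematic use of the $(\widetilde\nabla_X P)Y$ identity combined with the integrability of $\D_1$ from Theorem \ref{PD1=D3}, and the same passage to coordinate vector fields and to the quaternionic components $f=(p,q)$ with the coupling $\beta_1=\alpha_3$, $\beta_3=\alpha_1$ driving the mixed oscillator equations. The only point you understate is that the $P$-equations alone do not pin down every coefficient: the paper still needs one curvature computation, evaluating $\widetilde R(E_1,E_3)E_1$ in two ways, to obtain $h_{33}^1=0$ before all the brackets $[E_i,E_j]$ vanish.
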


\begin{proof}
 We assume that  $P\D_1=\D_2$ and without loss of generality we can choose $E_1$ such that $PE_1=E_3$. So in expression for the almost product structure $P$ we can take $\theta=\pi/2$ and $a_1=1$, $a_2=a_3=a_4=0$. Also, by using Proposition (\ref{PD1=D3}) we know that the almost complex distribution $\D_1$ is integrable, and (\ref{lemma1}) we have $h_{22}^1=-h_{11}^1$.
 Taking $(X,Y)\in \{(E_1,E_1),(E_2,E_2)\}$ in the second equation in last line of (\ref{popste}) we obtain \begin{align*}
 & h_{11}^1= 0,\hspace{4mm}h_{12}^1= 0,\hspace{4mm}h_{12}^3= 0,\hspace{4mm}h_{13}^2= 0, \\
 & h_{11}^3= -\frac{1}{\sqrt{3}},\hspace{4mm}h_{23}^2= -\frac{1}{2\sqrt{3}},\hspace{4mm}\Gamma_{11}^2= -h_{13}^1,\hspace{4mm}\Gamma_{21}^2= -h_{23}^1
\end{align*} and if we take $(E_3,E_2)$ in (\ref{popste}) we get \begin{align*}
 &  h_{13}^1= 0,\hspace{4mm}h_{23}^1= 0,\hspace{4mm}h_{33}^2= 0,\hspace{4mm}h_{33}^3= 0,\hspace{4mm}\Gamma_{31}^2=-h_{33}^1.
\end{align*} By computing $\widetilde{R}(E_1,E_3)E_1$ in two different ways we get $h_{33}^1=0$.
It now follows that \begin{align*}
 &  [E_1,E_2]= 0,\hspace{4mm}[E_1,E_3]= 0,\hspace{4mm}[E_2,E_3]= 0,
\end{align*} and we obtain that the vector fields $E_1, E_2, E_3$ i.e. $df(E_1), df(E_2), df(E_3)$ correspond to coordinate vector fields $\partial_u, \partial_v, \partial_t$. We write the immersion as $f=(p,q)$ and we can also write $df(E_i)=(p\alpha_i, q\beta_i)$, where $\alpha_i,$ $\beta_i$ are imaginary quaternionic  functions.
Using (\ref{Qp}) we can express $(dp(E_i),0)$ and $(0,dq(E_i))$ and then  straightforward computation gives us the following Euclidean
covariant derivatives: \begin{align}\label{euklidkovexN}
& \nabla^E_{\partial_u}dp(\partial_u)=\frac{1}{3}E_5-\frac{1}{\sqrt{3}}E_6\hspace{16mm}
\nabla^E_{\partial_u}dp(\partial_v)=-\frac{1}{2\sqrt{3}}E_5+\frac{1}{2}E_6\nonumber \\
& \nabla^E_{\partial_v}dp(\partial_u)=-\frac{1}{2\sqrt{3}}E_5+\frac{1}{2}E_6\hspace{11mm}
\nabla^E_{\partial_v}dp(\partial_v)=0\nonumber \\
& \nabla^E_{\partial_t}dp(\partial_u)=-\frac{1}{12}E_5+\frac{1}{4\sqrt{3}}E_6\hspace{10mm}
\nabla^E_{\partial_t}dp(\partial_v)=-\frac{1}{4\sqrt{3}}E_5+\frac{1}{4}E_6\nonumber \\
& \nabla^E_{\partial_u}dp(\partial_t)=-\frac{1}{12}E_5+\frac{1}{4\sqrt{3}}E_6\hspace{10mm}
\nabla^E_{\partial_u}dq(\partial_u)=-\frac{1}{3}E_5-\frac{1}{\sqrt{3}}E_6\nonumber \\
& \nabla^E_{\partial_v}dp(\partial_t)=-\frac{1}{4\sqrt{3}}E_5+\frac{1}{4}E_6\hspace{13mm}
\nabla^E_{\partial_v}dq(\partial_u)=-\frac{1}{2\sqrt{3}}E_5-\frac{1}{2}E_6\nonumber \\
& \nabla^E_{\partial_t}dp(\partial_t)=-\frac{1}{6}E_5+\frac{1}{2\sqrt{3}}E_6\hspace{13mm}
\nabla^E_{\partial_t}dq(\partial_u)=\frac{1}{12}E_5+\frac{1}{4\sqrt{3}}E_6\nonumber \\
& \nabla^E_{\partial_u}dq(\partial_v)=-\frac{1}{2\sqrt{3}}E_5-\frac{1}{2}E_6\hspace{12mm}
\nabla^E_{\partial_u}dq(\partial_t)=\frac{1}{12}E_5+\frac{1}{4\sqrt{3}}E_6\nonumber \\
& \nabla^E_{\partial_v}dq(\partial_v)=0\hspace{39mm}
\nabla^E_{\partial_v}dq(\partial_t)=-\frac{1}{4\sqrt{3}}E_5-\frac{1}{4}E_6\nonumber \\
& \nabla^E_{\partial_t}dq(\partial_v)=-\frac{1}{4\sqrt{3}}E_5-\frac{1}{4}E_6\hspace{12mm}
\nabla^E_{\partial_t}dq(\partial_t)=\frac{1}{6}E_5+\frac{1}{2\sqrt{3}}E_6.
\end{align}
Also, by using (\ref{vezica}) we obtain
\begin{align}\label{metexN}
&\langle\alpha_1,\alpha_2\rangle=\langle\beta_1,\beta_2\rangle=0,\hspace{3mm}\langle\alpha_1,\alpha_3\rangle=\langle\beta_1,\beta_3\rangle=\frac{1}{4},\nonumber\\
&\langle\alpha_2,\alpha_3\rangle=-\langle\beta_2,\beta_3\rangle=\frac{\sqrt{3}}{4},\nonumber\\
&\|\alpha_1\|=\|\alpha_2\|=\|\alpha_3\|=\|\beta_1\|=\|\beta_2\|=\|\beta_3\|=\frac{1}{\sqrt{2}}  .
\end{align}
As we have chosen $E_1$ such that $PE_1=E_3$, we can express all imaginary quaternions with $\alpha_1$ and $\alpha_3$
\begin{align}
& E_1=(p\alpha_1,q\alpha_3),\hspace{1.5mm} E_2=\frac{1}{\sqrt{3}}(p(2\alpha_3-\alpha_1),q(\alpha_3-2\alpha_1)),\hspace{1.5mm} E_3=(p\alpha_3,q\alpha_1),\nonumber \\
&E_4=\frac{1}{\sqrt{3}}(p(2\alpha_1-\alpha_3),q(\alpha_1-2\alpha_3)),\hspace{2mm}
E_5=2(p(\alpha_1\times\alpha_3),q(\alpha_1\times\alpha_3)),\nonumber\\
&E_6=\frac{2}{\sqrt{3}}(p(-\alpha_1\times\alpha_3),q(\alpha_1\times\alpha_3)).\nonumber
\end{align} Using formula $$(p_{ij},0)=-\langle\alpha_i,\alpha_j\rangle (p,0)+\nabla^E_{E_i}dp(E_j),\mbox{  }i,j\in\{1,2,3\}$$
from (\ref{euklidkovexN}) we obtain the derivatives of $\alpha_i$. So, we have \begin{align}\label{alfeexN}
&\partial_u(\alpha_1)=\frac{4}{3}\alpha_1\times\alpha_3\hspace{12mm}\partial_v(\alpha_1)=0\hspace{5mm}\partial_t(\alpha_1)=\frac{2}{3}\alpha_1\times\alpha_3\nonumber \\
&\partial_u(\alpha_2)=-\frac{4}{\sqrt{3}}\alpha_1\times\alpha_3\hspace{5mm}\partial_v(\alpha_2)=0\hspace{5mm}\partial_t(\alpha_2)=-\frac{2}{\sqrt{3}}\alpha_1\times\alpha_3\nonumber\\
& \partial_u(\alpha_3)=-\frac{4}{3}\alpha_1\times\alpha_3\hspace{8mm}\partial_v(\alpha_3)=0\hspace{5mm}\partial_t(\alpha_3)=-\frac{2}{3}\alpha_1\times\alpha_3.
\end{align} From the previous equations we obtain \begin{align}\label{izvexN}
&p\partial_u(\alpha_1)=-p\partial_u(\alpha_3)=p_{uu}+\frac{1}{2}p \\
& p\partial_t(\alpha_3)=-p\partial_t(\alpha_1)=p_{tt}+\frac{1}{2}p.
\end{align} Using the Euclidean covariant derivatives (\ref{euklidkovexN}) and the scalar products (\ref{metexN}) we get the next relations \begin{align}\label{relacije1}
&p_{uu}+\frac{1}{2}p+\frac{2}{\sqrt{3}}p_{vu}=0,\hspace{5mm}p_{tt}+\frac{1}{2}p-\frac{1}{\sqrt{3}}p_{vu}=0,\nonumber \\
&p_{tt}+\frac{1}{2}p_{uu}+\frac{3}{4}p=0,\hspace{5mm}p_{tu}+\frac{1}{4}p_{uu}+\frac{3}{8}p=0,\hspace{5mm}p_{vv}+\frac{1}{2}p=0,\nonumber \\
&p_{tu}=\frac{1}{2}p_{tt},\hspace{5mm}p_{tv}=\frac{\sqrt{3}}{2}p_{tt},\hspace{5mm} p_{tv}+\frac{\sqrt{3}}{4}p_{uu}+\frac{3\sqrt{3}}{8}p=0.
\end{align} Now, if we differentiate equation $p_u=p\alpha_1$ by $u$ and $p_t=p\alpha_3$ by $t$ using derivatives (\ref{alfeexN}) and relations (\ref{izvexN}) we get the following equations: \begin{align}\label{relacije2}
&p_{uuuu}+\frac{5}{3}p_{uu}+\frac{1}{4}p=0,\hspace{5mm}p_{tt}+\frac{2}{3}p_{t}=0,\hspace{5mm}p_{uuu}+\frac{3}{2}p_{u}=\frac{2}{3}p_t.
\end{align} From the first two  equations in (\ref{relacije2}) and $p_{vv}+\frac{1}{2}p=0$ we get that the general solution of $p$ has the form
\begin{align}\label{gensolp}
p(u,v,t)&=((A^0_1+A^1_1\cos(\sqrt{\frac{2}{3}}t)+A^2_1\sin(\sqrt{\frac{2}{3}}t))\cos(\sqrt{\frac{3}{2}}u)\nonumber\\
&+(A^0_2+A^1_2\cos(\sqrt{\frac{2}{3}}t)+A^2_2\sin(\sqrt{\frac{2}{3}}t))\sin(\sqrt{\frac{3}{2}}u) \nonumber\\
&+(A^0_3+A^1_3\cos(\sqrt{\frac{2}{3}}t)+A^2_3\sin(\sqrt{\frac{2}{3}}t))\cos(\frac{1}{\sqrt{6}}u)\nonumber\\
&+(A^0_4+A^1_4\cos(\sqrt{\frac{2}{3}}t)+A^2_4\sin(\sqrt{\frac{2}{3}}t))\sin(\frac{1}{\sqrt{6}}u)))\cos(\frac{1}{\sqrt{2}}v)\nonumber\\
&+((B^0_1+B^1_1\cos(\sqrt{\frac{2}{3}}t)+B^2_1\sin(\sqrt{\frac{2}{3}}t))\cos(\sqrt{\frac{3}{2}}u)\nonumber\\
&+(B^0_2+B^1_2\cos(\sqrt{\frac{2}{3}}t)+B^2_2\sin(\sqrt{\frac{2}{3}}t))\sin(\sqrt{\frac{3}{2}}u) \nonumber\\
& +(B^0_3+B^1_3\cos(\sqrt{\frac{2}{3}}t)+B^2_3\sin(\sqrt{\frac{2}{3}}t))\cos(\frac{1}{\sqrt{6}}u)\nonumber\\
&+(B^0_4+B^1_4\cos(\sqrt{\frac{2}{3}}t)+B^2_4\sin(\sqrt{\frac{2}{3}}t))\sin(\frac{1}{\sqrt{6}}u)))\sin(\frac{1}{\sqrt{2}}v)
\end{align}
for some constants $A^j_i,\mbox{  }B^j_i\in \H$. When we use equation $p_{tv}=\frac{\sqrt{3}}{2}p_{tt}$ we get that the coefficients of $p$ satisfy
\begin{align*}
&B^1_1=A^2_1,\hspace{5mm}B^2_1=-A^1_1,\hspace{5mm}B^1_2=A^2_2,\hspace{5mm}B^2_2=-A^1_2,\nonumber\\
&B^1_3=A^2_3,\hspace{5mm}B^2_3=-A^1_3,\hspace{5mm} B^1_4=A^2_4,\hspace{5mm}B^2_4=-A^1_4.
\end{align*}
Also, when we use equation $p_{tv}=\frac{\sqrt{3}}{2}p_{tt}$ we get the following relations
\begin{align*}
 & A^1_4=A^2_3,\hspace{5mm}A^2_4=-A^1_3,\hspace{5mm}B^1_4=B^2_3,\hspace{5mm}B^2_4=-B^1_3, \\
& A^1_1=A^2_2=A^2_1=A^1_2= B^1_1=B^2_2=B^2_1=B^1_2=0.
\end{align*}
From $p_{uuu}+\frac{3}{2}p_{u}=\frac{2}{3}p_t$ we obtain that $A^0_3=A^0_4=B^0_3=B^0_4=0$  and finally, from $p_{uu}+\frac{1}{2}p+\frac{2}{\sqrt{3}}p_{vu}=0$ we have that $B^0_1=-A^0_2$ and $B^0_2=A^0_1$. From this moment we will use this notation:$A^0_1=A$, $A^0_2=B$, $A^1_3=C$ and $A^2_3=D$. So $p$ has the form
\begin{align*}
p(u,v,t)&=
A\cos(\sqrt{\frac{3}{2}}u-\frac{1}{\sqrt{2}}v)+B\sin(\sqrt{\frac{3}{2}}u-\frac{1}{\sqrt{2}}v)\\ &+C\cos(\sqrt{\frac{2}{3}}t+\frac{1}{\sqrt{6}}u+\frac{1}{\sqrt{2}}v)+D\sin(\sqrt{\frac{2}{3}}t+\frac{1}{\sqrt{6}}u+\frac{1}{\sqrt{2}}v).
\end{align*}
and we can easily check that the other relations in (\ref{relacije1}) and (\ref{relacije2}) are also satisfied.
If we look at (\ref{alfeexN}) we see that sum of $\alpha_1$ and $\alpha_3$ is a constant  imaginary quaternion which length is $\sqrt{\frac{3}{2}}$, so we can write
\begin{align}\label{vezazaalfe}
& \alpha_1+\alpha_3=\sqrt{\frac{3}{2}}I,\hspace{10mm} I\in Im \H ,\hspace{5mm} \langle I,I\rangle =1.
\end{align}
We have $p_u+p_t=\sqrt{\frac{3}{2}}pI$, which implies
\begin{align}\label{vezakoeficijenata}
& B=AI,\hspace{5mm} D=CI\hspace{7mm} and\hspace{7mm} \|A\|=\|B\|,\hspace{5mm} \|C\|=\|D\|.
\end{align}
From these relations we get
\begin{align}\label{Aovi}
& \bar{A}B=-\bar{B}A={\|A\|}^2I\hspace{3mm}and\hspace{3mm}\bar{C}D=-\bar{D}C={\|C\|}^2I.
\end{align}
We can express $\alpha_i$ using $\alpha_1=\bar{p}p_u$, $\alpha_2=\bar{p}p_v$, $\alpha_3=\bar{p}p_t$ and from $\partial_v(\alpha_i)=0$ we obtain
\begin{align}\label{izizvoda}
& \bar{B}C=\bar{A}D,\hspace{3mm}\bar{B}D=-\bar{A}C,\hspace{3mm}\bar{D}B=-\bar{C}A,\hspace{3mm}\bar{C}B=\bar{D}A\hspace{3mm}\bar{B}D=-\bar{A}C.
\end{align}
Using these relations we have
\begin{align*}
\alpha_3 &= \sqrt{\frac{2}{3}}\bar{C}D+\sqrt{\frac{2}{3}}\bar{A}D\cos(\sqrt{\frac{2}{3}}t+\frac{2\sqrt{2}}{\sqrt{3}}u)-\sqrt{\frac{2}{3}}\bar{A}C\sin(\sqrt{\frac{2}{3}}t+\frac{2\sqrt{2}}{\sqrt{3}}u),
\end{align*}
so we can conclude that $\bar{A}D\mbox{,   } \bar{A}C \in Im \H $ and
\begin{align}\label{alfe}
\alpha_1 &=\sqrt{\frac{3}{2}}\bar{A}B+\frac{1}{\sqrt{6}}\bar{C}D-\sqrt{\frac{2}{3}}\bar{A}D\cos(\sqrt{\frac{2}{3}}t+\frac{2\sqrt{2}}{\sqrt{3}}u)\nonumber\\
&+\sqrt{\frac{2}{3}}\bar{A}C\sin(\sqrt{\frac{2}{3}}t+\frac{2\sqrt{2}}{\sqrt{3}}u),\\
\alpha_2 &= -\frac{1}{\sqrt{2}}\bar{A}B+\frac{1}{\sqrt{2}}\bar{C}D+\sqrt{2}\bar{A}D\cos(\sqrt{\frac{2}{3}}t+\frac{2\sqrt{2}}{\sqrt{3}}u)\nonumber\\
&-\sqrt{2}\bar{A}C\sin(\sqrt{\frac{2}{3}}t+\frac{2\sqrt{2}}{\sqrt{3}}u).\nonumber
\end{align}
From (\ref{vezazaalfe}) we get $\bar{A}B+\bar{C}D=I$, which implies ${\|A\|}^2+{\|C\|}^2=1$. Also, from $\partial_u(\alpha_1)=\frac{4}{3}\alpha_1\times\alpha_3 $ we find that $\bar{A}D=\bar{A}C\times I $ and $\bar{A}C=I\times\bar{A}D.$ So we can conclude that $I$, $\bar{A}D$ and $\bar{A}C$ are mutually orthogonal and $\|\bar{A}D\|=\|\bar{A}C\|$.

We can obtain $q$ in the  same way as $p$. Using $$q\partial_u(\alpha_1)=-q\partial_u(\alpha_3)=-q_{uu}-\frac{1}{2}q,\mbox{   } q\partial_t(\alpha_1)=-q\partial_t(\alpha_3)=q_{tt}+\frac{1}{2}q,$$ we get the following relations:
\begin{align}\label{qovi}
&q_{uuuu}+\frac{5}{3}q_{uu}+\frac{1}{4}q=0,\hspace{3mm}q_{tt}+\frac{2}{3}q_{t}=0,\hspace{3mm} p_{vv}+\frac{1}{2}p=0, \nonumber\\
&q_{vu}-\frac{\sqrt{3}}{2}q_{uu}-\frac{\sqrt{3}}{4}q=0,\hspace{3mm}q_{vu}+\sqrt{3}q_{tt}+\frac{\sqrt{3}}{2}q=0,\hspace{3mm}q_{tu}=\frac{1}{2}q_{tt}, \nonumber\\
&q_{tt}+\frac{1}{2}q_{uu}+\frac{3}{4}q=0,\hspace{3mm}q_{tu}+\frac{1}{4}q_{uu}+\frac{3}{8}q=0,\hspace{3mm}q_{tv}=-\frac{\sqrt{3}}{2}q_{tt},  \nonumber\\
& q_{tv}=\frac{\sqrt{3}}{4}q_{uu}+\frac{3\sqrt{3}}{8}q=0,\hspace{3mm}q_{uuu}+\frac{3}{2}q_{u}=\frac{2}{3}q_t,
\end{align}
so, the general solution for $q$ is the same as the general solution for $p$. When we apply this in all previous relations we get that $q$ has the form
\begin{align*}
q(u,v,t)&=
E\cos(\sqrt{\frac{3}{2}}u+\frac{1}{\sqrt{2}}v)+F\sin(\sqrt{\frac{3}{2}}u+\frac{1}{\sqrt{2}}v)\\ &+G\cos(\sqrt{\frac{2}{3}}t+\frac{1}{\sqrt{6}}u-\frac{1}{\sqrt{2}}v)+H\sin(\sqrt{\frac{2}{3}}t+\frac{1}{\sqrt{6}}u-\frac{1}{\sqrt{2}}v),
\end{align*}
for some constants $A,\mbox{ }B ,\mbox{ }C,\mbox{ }D \in \H$.
Also, we have $\alpha_3=\bar{q}q_u$, $\beta_2=\bar{q}q_v$, $\alpha_1=\bar{q}q_t$, so, from (\ref{vezazaalfe}) we get
\begin{align*}
& F=EI,\hspace{5mm} H=GI\hspace{7mm} and\hspace{7mm} \|E\|=\|F\|,\hspace{5mm}\|G\|=\|H\|
\end{align*}
and in the same way as for $p$ we get:
$$\bar{F}G=\bar{E}H,\mbox{ }\bar{F}H=-\bar{E}G,\mbox{ }\bar{H}F=-\bar{G}E,\mbox{ }\bar{G}F=\bar{H}E,\mbox{ }\bar{F}H=-\bar{E}G\mbox{ and}$$
$\bar{E}G, \bar{E}H \in \H. $ So, we obtain:
\begin{align}\label{alfeq}
\alpha_1 &=\sqrt{\frac{2}{3}}\bar{G}H+\sqrt{\frac{2}{3}}\bar{E}H\cos(\sqrt{\frac{2}{3}}t+\frac{2\sqrt{2}}{\sqrt{3}}u)-\sqrt{\frac{2}{3}}\bar{E}G\sin(\sqrt{\frac{2}{3}}t+\frac{2\sqrt{2}}{\sqrt{3}}u),\\
\beta_2 &= \frac{1}{\sqrt{2}}\bar{E}F-\frac{1}{\sqrt{2}}\bar{G}H-\sqrt{2}\bar{E}H\cos(\sqrt{\frac{2}{3}}t+\frac{2\sqrt{2}}{\sqrt{3}}u)\nonumber\\
&+\sqrt{2}\bar{E}G\sin(\sqrt{\frac{2}{3}}t+\frac{2\sqrt{2}}{\sqrt{3}}u),\nonumber\\
\alpha_3 &= \sqrt{\frac{3}{2}}\bar{E}F+\frac{1}{\sqrt{6}}\bar{G}H-\sqrt{\frac{2}{3}}\bar{E}H\cos(\sqrt{\frac{2}{3}}t+\frac{2\sqrt{2}}{\sqrt{3}}u)\nonumber\\
&+\sqrt{\frac{2}{3}}\bar{E}G\sin(\sqrt{\frac{2}{3}}t+\frac{2\sqrt{2}}{\sqrt{3}}u)\nonumber
\end{align}
and $\bar{E}F+\bar{G}H=I, \mbox{ }{\|E\|}^2+{\|G\|}^2=1,\mbox{ }\bar{E}H=\bar{E}G\times I $ and $\bar{E}G=I\times\bar{E}H$.
If we compare (\ref{alfe}) and (\ref{alfeq}) we get the following relations between the coefficients $$\bar{E}G=-\bar{A}C,\mbox{ }\bar{E}H=-\bar{A}D,\mbox{ }\bar{G}H=\frac{3}{2}\bar{A}B+\frac{1}{2}\bar{C}D,\mbox{ }\bar{E}F=-\frac{1}{2}\bar{A}B+\frac{1}{2}\bar{C}D.$$
For the initial condition $\alpha_1(0,0)=\frac{\sqrt{3}}{2\sqrt{2}}i-\frac{\sqrt{3}}{2\sqrt{2}}j$, we have the unique solution \newline $\alpha_1=\frac{\sqrt{3}}{2\sqrt{2}}i-\frac{\sqrt{3}}{2\sqrt{2}}\cos(\sqrt{\frac{2}{3}}t+\frac{2\sqrt{2}}{\sqrt{3}}u)j+\sin(\sqrt{\frac{2}{3}}t+\frac{2\sqrt{2}}{\sqrt{3}}u)k$. We get $\bar{A}D=\frac{\sqrt{3}}{4}j$ and $\bar{A}C=\frac{\sqrt{3}}{4}k$, so $I=i$ and $\bar{C}D=\frac{3}{4}i$ and it immediately follows that $\bar{A}B=\frac{1}{4}i$ and $\|A\|=\|E\|=\frac{1}{2}$.
Hence the immersion is given by \begin{align*}f(u,v,t) & =(A(\cos(\sqrt{\frac{3}{2}}u-\frac{1}{\sqrt{2}}v)+\sin\big(\sqrt{\frac{3}{2}}u-\frac{1}{\sqrt{2}}v\big)i\\
   &\hspace{-10mm}+\sqrt{3}\sin\big(\sqrt{\frac{2}{3}}t+\frac{1}{\sqrt{6}}u+\frac{1}{\sqrt{2}}v\big)+\sqrt{3}\cos\big(\sqrt{\frac{2}{3}}t+\frac{1}{\sqrt{6}}u+\frac{1}{\sqrt{2}}v\big)k),\\
   &E(\cos\big(\sqrt{\frac{3}{2}}u+\frac{1}{\sqrt{2}}v\big)+\sin\big(\sqrt{\frac{3}{2}}u+\frac{1}{\sqrt{2}}v\big)i\\
   &\hspace{-11mm}-\sqrt{3}\sin\big(\sqrt{\frac{2}{3}}t+\frac{1}{\sqrt{6}}u-\frac{1}{\sqrt{2}}v\big)j-\sqrt{3}\cos\big(\sqrt{\frac{2}{3}}t+\frac{1}{\sqrt{6}}u-\frac{1}{\sqrt{2}}v\big)k)).\end{align*}
\end{proof}

\begin{corollary}
Let $M$ be a
$3$-dimensional CR submanifold of $\S^3\times\S^3$ such that $P\D_1=
\D_3$. Then $M$ is locally congruent with one of the following immersions
\begin{align*}
f_1(u)&=(u \frac{\sqrt{3}+i}{2} u^{-1}, u^{-1}),\\
f_2(u)&=(u^{-1}, u \frac{\sqrt{3}+i}{2} u^{-1}),\\
f_3(u)&=(u, u \frac{\sqrt{3}+i}{2}),
\end{align*}
where $u\in \S^3$.
\end{corollary}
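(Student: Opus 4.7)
The plan is to rerun the analysis of Theorem \ref{PD1=D2} in the present case $P\D_1=\D_3$, relying crucially on the already established fact (Theorem \ref{PD1=D3}) that $\D_1$ must be non-integrable, so that from Lemma \ref{lemma1} one has $h_{11}^1+h_{22}^1\ne 0$. Specializing Lemma 3 to $\cos\theta=0$, $a_1=a_2=0$ with $a_3=\cos t$, $a_4=\sin t$ gives $PE_1=\cos t\,E_5+\sin t\,E_6$, $PE_2=-\sin t\,E_5+\cos t\,E_6$, together with $PE_3,PE_4\in \D_2$ and $PE_5,PE_6\in\D_1$. A check analogous to the calculation after (\ref{rot}) shows that the available rotation in $\D_1$ does not kill $t$, so $t$ remains an unknown function to be determined.

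The first step is to plug every pair $(X,Y)\in\{E_1,E_2,E_3\}^2$ into the identity $(\tilde\nabla_X P)Y=\tfrac12(JG(X,PY)+JPG(X,Y))$ from (\ref{popste}) and read off the resulting linear equations in the connection and second fundamental form coefficients, using the multiplication table (\ref{tabela}) and the symmetries of Lemma \ref{lemma1}. The computation already carried out in the second half of the proof of Theorem \ref{PD1=D3} gives the bulk of these equations; discarding the integrability hypothesis $h_{11}^1+h_{22}^1=0$ used there, what remain are compatibility conditions that fix all $h_{ij}^k,b_{ij}^k,\Gamma_{ij}^k$ as explicit (mostly constant) expressions and additionally determine the derivatives of $t$ along the frame. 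The Gauss equation applied to $\widetilde R(E_1,E_2)E_3$ and $\widetilde R(E_1,E_3)E_i$ is then used to remove the remaining ambiguities, in particular to pin down $t$ up to an additive constant.

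Next, with all structural coefficients known, I compute the Lie brackets $[E_i,E_j]$: since $\D_1$ is now non-integrable, $[E_1,E_2]$ has a nonzero $E_3$-component, so we cannot directly identify $E_1,E_2,E_3$ with coordinate vector fields (this is in sharp contrast with Theorems \ref{PD1=D1PD2=D2} and \ref{PD1=D2}). The brackets should nonetheless show that $M$ carries a Lie-group structure, in fact one isomorphic to $\S^3$, which matches the one-parameter $u\in\S^3$ appearing in the statement. Writing $f=(p,q)$ and $df(E_i)=(p\alpha_i,q\beta_i)$ with $\alpha_i,\beta_i\in\mathrm{Im}\,\H$, formulas (\ref{Qp}) and (\ref{koneksija}) then translate the Gauss--Weingarten equations into a closed quaternionic system for $\alpha_i,\beta_i$. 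Because of the Lie-group picture, I expect these to integrate in the form $p(u)=u\,c\,u^{-1}$ (respectively $p(u)=u^{\pm 1}$) for $u$ a unit quaternion parametrizing $M\cong \S^3$; the specific constant $c=\tfrac{\sqrt 3+i}{2}$ is then forced by the norms $\|\alpha_i\|,\|\beta_i\|$ prescribed by (\ref{vezica}).

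The hard part is precisely this last integration step: making the correct change of frame so that the quaternionic ODE system decouples, and then correctly identifying the integration constants using the nearly K\"ahler normalization. Once one representative solution $f_1$ is obtained, the remaining two immersions follow by applying the isometries $\mathcal F_1$ and $\mathcal F_1\circ\mathcal F_2\circ\mathcal F_1$: by Lemma \ref{Lemma4} these permute the three canonical almost product structures on $\S^3\times\S^3$, so they generate, from the single solution $f_1$, the three branches $f_1,f_2,f_3$ listed in the statement, and this list is exhaustive up to congruence.
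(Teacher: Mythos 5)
Your proposal follows essentially the same route as the paper: exploit the non-integrability of $\D_1$ from the preceding theorem, feed all pairs $(X,Y)$ into the $(\widetilde\nabla_XP)Y$ identity of (\ref{popste}) to fix the second fundamental form and force $t$ to be a constant in $\{0,\pi/3,2\pi/3\}$ mod $\pi$, normalize the frame so that $M$ is recognized as a Berger sphere (your ``Lie group $\cong\S^3$''), integrate the resulting quaternionic ODE system to obtain $f_1$, and recover $f_2,f_3$ via the isometries $\mathcal F_1,\mathcal F_2$. The only minor deviations are that in the paper the constancy of $t$ already falls out of the two incompatible expressions for $h_{12}^1$ in the $P$-identity (the curvature equations are used instead to normalize the $\Gamma_{i1}^2$ by a rotation $w$ in $\D_1$), and that the third immersion is produced by $\mathcal F_2\circ\mathcal F_1$ rather than $\mathcal F_1\circ\mathcal F_2\circ\mathcal F_1$; neither affects the argument.
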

\begin{proof}
Here, we assume that the distribution $\D_1$ is not integrable, so we have that $h_{11}^1\neq -h_{22}^1$. Now, from the second equality in the third line of (\ref{popste}) we have that
\begin{align}
0&=g(E_1, G(E_1, PE_1)+P(G(E_1, E_1))+2J((\widetilde{\nabla}_{E_1} P) E_1))\nonumber\\
&=4(-h_{11}^1 a_1+a_3h_{11}^3+a_4 h_{12}^3),\nonumber\\
0&=g(E_2, G(E_1, PE_1)+P(G(E_1, E_1))+2J((\widetilde{\nabla}_{E_1} P) E_1))\nonumber\\
&=-4(h_{12}^1 a_1+a_4 h_{11}^3-a_3 h_{12}^3),\nonumber\\
0&=g(E_1, G(E_2, PE_1)+P(G(E_2, E_1))+2J((\widetilde{\nabla}_{E_2} P) E_1))\nonumber\\
&=-4(h_{22}^1 a_1+a_3 h_{11}^3+a_4 h_{12}^3),\nonumber\\
0&=g(E_1, G(E_3, PE_1)+P(G(E_3, E_1))+2J((\widetilde{\nabla}_{E_3} P) E_1))\nonumber\\
&=-4 h_{13}^1 a_1+\frac{2}{\sqrt{3}}a_3-4 a_4h_{13}^2+4a_3h_{23}^2,\nonumber\\
0&=g(E_2, G(E_3, PE_1)+P(G(E_3, E_1))+2J((\widetilde{\nabla}_{E_1} P) E_1))\nonumber\\
&=-4 h_{23}^1 a_1+\frac{4}{\sqrt{3}}a_3-4
a_4h_{13}^2+4a_3h_{23}^2.\label{imeneko}
\end{align}
Moreover, then for any pair of
vector fields $E_1$, $E_2$ the corresponding functions $a_1$ and $a_2$ vanish, so we
still have a
freedom of choice for the frame of $\D_1$. Also $a_3^2+a_4^2=1$, so the first and the
second equations of (\ref{imeneko}) now imply that $h_{11}^3=h_{12}^3=0$. Similarly,
from the fourth
and the fifth  equations of (\ref{imeneko}) we get that $h_{13}^2=0,$
$h_{23}^2=-\frac{1}{2\sqrt{3}}$. Further on, we denote by $t$ a locally differentiable function, such that
$a_3=\cos t, a_4=\sin t$.
Now, we have
\begin{align}
0&=G(E_1, PE_1)+P(G(E_1, E_1))+2J((\widetilde{\nabla}_{E_1} P) E_1)\nonumber\\
&=2(\frac{1}{\sqrt{3}}\cos t-h_{11}^1 \cos 2t+h_{12}^1\sin2t)E_3\nonumber\\
&-2(\frac{1}{\sqrt{3}}\sin t+h_{12}^1\cos 2t+h_{11}^1 \sin
2t)E_4\nonumber\\
&+ 2\cos t(E_1(t)+h_{13}^1)E_5+2\sin t(E_1(t)+h_{13}^1),\nonumber\\
0&=G(E_2, PE_1)+P(G(E_2, E_1))+2J((\widetilde{\nabla}_{E_2} P) E_1)\nonumber\\
&=2(\frac{1}{\sqrt{3}}\sin t+h_{22}^1\sin 2t-h_{12}^1\cos 2t)E_3\nonumber\\
&+
2(\frac{1}{\sqrt{3}}\cos t-h_{22}^1\cos 2t+h_{12}^1\sin 2t)E_4\nonumber\\
&+2\cos t(E_2(t)+h_{23}^1)E_5+2\sin t(E_2(t)+h_{23}^1)E_6,\nonumber\\
0&=G(E_3, PE_1)+P(G(E_3, E_1))+2J((\widetilde{\nabla}_{E_3} P) E_1)\nonumber\\
&=2(h_{33}^3\cos t-h_{33}^2\sin t-h_{13}^1\cos 2t+h_{23}^1\sin
2t)E_3\nonumber\\
&-2(h_{33}^3\sin t+h_{33}^2\cos t+h_{23}^1\cos 2t+h_{13}^1\sin
2t)E_4\nonumber\\
&+2\cos t(E_3(t)+h_{33}^1)E_5+2\sin t(E_3(t)+h_{33}^1)E_6.\label{kuda}
\end{align}
Straightforwardly, this implies that
\begin{align}
E_1(t)=-h_{13}^1,\quad E_2(t)=-h_{23}^1,\quad
E_3(t)=-h_{33}^1.\label{izvodi}
\end{align}
Moreover, solving the first equation of (\ref{kuda}) for $h_{12}^1,
h_{11}^1$ yields
\begin{align*}
h_{12}^1=-\frac{\sin 3t}{\sqrt{3}},\quad h_{11}^1=\frac{\cos
3t}{\sqrt{3}},
\end{align*}
and the solution of the second equation of (\ref{kuda}) is given by
\begin{align*}
h_{12}^1=\frac{\sin 3t}{\sqrt{3}},\quad h_{22}^1=\frac{\cos 3t}{\sqrt{3}}.
\end{align*}
Hence, we have that $h_{12}^1=\sin 3t/\sqrt{3}=0$ and $t$ is a constant function
of the form $t=k \pi/3, k\in \mathbb Z$. Moreover, (\ref{izvodi}) yields that
\begin{align*}
h_{23}^1=0,\quad h_{13}^1=0,\quad h_{33}^1=0.
\end{align*}
We have
\begin{align*}
0&=G(E_3, PE_1)+P(G(E_3, E_1))+2J((\widetilde{\nabla}_{E_3} P) E_1)\nonumber\\
&=2(h_{33}^3\cos t-h_{33}^2\sin t)E_3-2(h_{33}^3\sin t+h_{33}^2\cos t)E_4,
\end{align*}
implying
\begin{align*}
h_{33}^2=0,\quad h_{33}^3=0.
\end{align*}
Now, we consider the curvature tensor of $\widetilde{\nabla}$. Computations show that
\begin{align}
0&=\widetilde{\nabla}_{E_1}\widetilde{\nabla}_{E_2}E_1
-\widetilde{\nabla}_{E_2}\widetilde{\nabla}_{E_1}E_1-\widetilde{\nabla}_{[E_1,
E_2]}E_1-R(E_1, E_2)E_1\nonumber\\
&=(\frac{4}{3}+(\Gamma_{11}^2)^2+(\Gamma_{21}^2)^2+\frac{2}{\sqrt{3}}\Gamma_{31}^2\cos
3t-E_2(\Gamma_{11}^2)+E_1(\Gamma_{21}^2))E_2,\nonumber\\
0&=\widetilde{\nabla}_{E_1}\widetilde{\nabla}_{E_3}E_1
-\widetilde{\nabla}_{E_3}\widetilde{\nabla}_{E_1}E_1-\widetilde{\nabla}_{[E_1,
E_3]}E_1-R(E_1, E_3)E_1\nonumber\\
&=(\Gamma_{21}^2 \Gamma_{31}^2-\frac{1}{\sqrt{3}}\Gamma_{21}^2\cos
3t-E_3(\Gamma_{11}^2)+E_1(\Gamma_{31}^2))E_2,\nonumber\\
0&=\widetilde{\nabla}_{E_2}
\widetilde{\nabla}_{E_3}E_1
-\widetilde{\nabla}_{E_3}\widetilde{\nabla}_{E_2}E_1-\widetilde{\nabla}_{[E_2,
E_3]}E_1-R(E_2, E_3)E_1\nonumber\\
&=(-\Gamma_{11}^2\Gamma_{31}^2+\frac{1}{\sqrt{3}}\cos
3t-E_3(\Gamma_{21}^2)+E_2(\Gamma_{31}^2))E_2\label{jedkriv}
\end{align}
where $\widetilde{R}(X, Y)Z$ is given by
(\ref{krivina}).
Let us consider an arbitrary solution of the system (\ref{jedkriv}). Recall that
\begin{align}\Gamma_{11}^2&=-g([E_1, E_2], E_1),\nonumber\\
\Gamma_{21}^2&=-g([E_1, E_2], E_2).\label{ime1}
\end{align}
Let $w$ be a differentiable function given by $E_1(w)=-\Gamma_{11}^2,
E_2(w)=-\Gamma_{21}^2,
E_3(w)=-(2+\sqrt{3}\Gamma_{31}^2 \cos 3t)\sec 3t/\sqrt{3}$. Since
\begin{align*}
(\widetilde{\nabla}_{E_1}E_2&-\widetilde{\nabla}_{E_2}E_1-[E_1, E_2])w\nonumber\\
&=\frac{4}{3}+(\Gamma_{11}^2)^2+(\Gamma_{21}^2)^2+\frac{2}{\sqrt{3}}\Gamma_{31}^2\cos
3t-E_2(\Gamma_{11}^2)+E_1(\Gamma_{21}^2)=0,\\
(\widetilde{\nabla}_{E_1}E_3&-\widetilde{\nabla}_{E_3}E_1-[E_1, E_3])w\nonumber\\
&=\Gamma_{21}^2 \Gamma_{31}^2-\frac{1}{\sqrt{3}}\Gamma_{21}^2\cos
3t-E_3(\Gamma_{11}^2)+E_1(\Gamma_{31}^2)=0,\\
(\widetilde{\nabla}_{E_2}E_3&-\widetilde{\nabla}_{E_3}E_2-[E_2, E_3])w\nonumber\\
&=-\Gamma_{11}^2\Gamma_{31}^2+\frac{1}{\sqrt{3}}\Gamma_{11}^2\cos
3t-E_3(\Gamma_{21}^2)+E_2(\Gamma_{31}^2)=0,
\end{align*}
the integrability conditions for $w$ are satisfied and such function exists. Then, we
can again consider a rotation in the distribution $\D_1$ given by
\begin{align*}
\widetilde{E}_1&=\cos w E_1+\sin w E_2,\\
\widetilde{E}_2&=-\sin w E_1+\cos w E_2.
\end{align*}
Straightforward computation then shows that
\begin{align*}
[\widetilde{E}_1, \widetilde{E}_2]=-(\frac{2}{\sqrt{3}}\cos 3t)E_3.
\end{align*}
Therefore, from (\ref{ime1}) we have that
$\widetilde{\Gamma}_{11}^2=\widetilde{\Gamma}_{21}^2=0$ and further, from
(\ref{jedkriv}) follows that $\widetilde{\Gamma}_{31}^2=-2\sec
(3t)/\sqrt{3}=-2\cos (3t)/\sqrt{3}$. From now on, we deal with this particular frame
and omit the  over-tildes. Note that now we have that $\widetilde{\nabla}_{E_i}E_i=0$
and moreover
\begin{align*}
\widetilde{\nabla}_{E_1}E_2&=-\frac{\cos 3t}{\sqrt{3}}E_3,&
\widetilde{\nabla}_{E_1}E_3&=\frac{\cos 3t}{\sqrt{3}}E_2,\\
\widetilde{\nabla}_{E_2}E_1&=\frac{\cos 3t}{\sqrt{3}}E_3,&
\widetilde{\nabla}_{E_2}E_3&=-\frac{\cos 3t}{\sqrt{3}}E_1,\\
\widetilde{\nabla}_{E_3}E_1&=-\frac{2\cos 3t}{3t}E_2,&
\widetilde{\nabla}_{E_3}E_2&=\frac{2\cos 3t}{3t}E_1.
\end{align*}
We recall here the notion of a Berger sphere $(\S^3, g_1)$ where the metric $g_1$ has the expresion
\begin{align*}
g_1(X,Y)=\frac{4}{\kappa}(\langle X, Y\rangle+(\frac{4\tau^2}{\kappa}-1)\langle X,
X_1\rangle\langle Y, X_1\rangle),
\end{align*}
where $X_1=pi,\mbox{ } X_2=pj,\mbox{ } X_3=-pk$ and $\tau$, $\kappa$ are constants.
Then the vector fields $\overline{E}_1=\frac{\kappa}{4\tau}X_1,$
$\overline{E}_2=\frac{\sqrt{\kappa}}{2}X_2,$
$\overline{E}_3=\frac{\sqrt{\kappa}}{2} X_3$ are orthogonal and a unit with respect to the metric
$g_1$. The Levi-Civita connection $\overline{\nabla}$ of the metric $g_1$ is then
given by
$\overline{\nabla}_{\overline{E}_i}\overline{E}_i=0$ and
\begin{align*}
&\overline{\nabla}_{\overline{E}_1}\overline{E}_2
=-\overline{\nabla}_{\overline{E}_2}\overline{E}_1
=(\tau-\frac{\kappa}{2\tau})\overline{E}_3,\\
&\overline{\nabla}_{\overline{E}_2}\overline{E}_3
=-\overline{\nabla}_{\overline{E}_3}\overline{E}_2=-\tau \overline{E}_1,\\
&\overline{\nabla}_{\overline{E}_3}\overline{E}_1=
-\overline{\nabla}_{\overline{E}_1}\overline{E}_3
=-\tau \overline{E}_2.
\end{align*}
Moreover, the following proposition, see \cite{DVV2} shows that a manifold
admitting such vector fields is locally isometric to a Berger sphere.
\begin{proposition} Let $(M_1,\nabla^1)$ and $(M_2,\nabla^2)$ be $n$-dimensional
Riemannian manifolds with Levi-Civita connections. Suppose that there exist constants
$c_{ij}^k$, $i, j,
k\in\{1,\dots, n\}$ such that for any $p_1\in M_1$ and $p_2\in M_2$ there exist
orthonormal frames $E^1_1,\dots, E^1_n$ and $E^2_1,\dots, E^2_n$ around $p_1$ and
$p_2$ respectively,
such that $\nabla^1_{E^1_i}E^1_j=\sum c_{ij}^k E^1_k$ and $\nabla^2_{E^2_i}E^2_j=\sum
c_{ij}^k E^2_k$. Then, for every point $p_1\in M_1$ and $p_2\in M_2$ there exists a
local isometry
mapping some neighbourhood of $p_1$ into some neighbourhood of $p_2$.
\end{proposition}
In our case, by taking $E_1=\frac{\sqrt{2}}{2} X_2,$ $E_2=\frac{\sqrt{2}}{2} X_3,$ $E_3=\frac{\sqrt{3}}{2}\cos
3t X_1$ we see that $M$ is then congruent to a Berger sphere with $\tau=\frac{\cos
3t}{\sqrt{3}}$ and
$\kappa=2$. Further on, the straightforward computation shows that the Euclidean
covariant derivatives are given by
\begin{align}
&\nabla^E_{E_1}E_1=\frac{\sin t}{\sqrt{3}}E_3-\frac{4\cos t\sin^2
t}{\sqrt{3}}E_4,\nonumber\\
&\nabla^E_{E_1}E_2=-\frac{\cos 3t}{\sqrt{3}}E_3,\nonumber\\
&\nabla^E_{E_1}E_3=-\frac{\sin
t}{2\sqrt{3}}E_1+\frac{\cos t+2\cos 3t}{2\sqrt{3}}E_2+\frac{\cos t\sin
t}{\sqrt{3}}E_5+\frac{\sin^2
t}{\sqrt{3}}E_6,\nonumber\\
&\nabla^E_{E_2}E_1=\frac{\cos 3t}{\sqrt{3}}E_3,\nonumber\\
&\nabla^E_{E_2}E_2=\frac{\sin t}{\sqrt{3}}E_3-\frac{4\cos
t\sin^2t}{\sqrt{3}}E_4,\nonumber\\
&\nabla^E_{E_2}E_3=-\frac{\cos t+2\cos 3t}{2\sqrt{3}}E_1-\frac{\sin
t}{2\sqrt{3}}E_2-\frac{\sin^2t}{\sqrt{3}}E_5+\frac{\cos t\sin
t}{\sqrt{3}}E_6,\nonumber\\
&\nabla^E_{E_3}E_1=-\frac{\sin t}{2\sqrt{3}}E_1+\frac{\cos t-4 \cos
3t}{2\sqrt{3}}E_2+\frac{\cos t\sin
t}{\sqrt{3}}E_5+\frac{\sin^2t}{\sqrt{3}}E_6,\nonumber\\
&\nabla^E_{E_3}E_2=-\frac{\cos t-4\cos 3t}{2\sqrt{3}}E_1-\frac{\sin
t}{2\sqrt{3}}E_2-\frac{\sin^2 t}{\sqrt{3}}E_5+\frac{\cos t\sin
t}{\sqrt{3}}E_6,\nonumber\\
&\nabla^E_{E_3}E_3=0.\label{kovarijantni}
\end{align}
We write the immersion as $f=(p, q)$ and $df(E_i)=(p\alpha_i, q\beta_i)$
where $\alpha_i,$ $\beta_i$ are imaginary quaternion functions.
We  have that
\begin{align*}
PE_1&=\cos t E_5+\sin t E_6,\\
PE_3&=(p\beta_3, q\alpha_3)=\cos 2t E_3+\sin 2t E_4\\
&=\cos 2t (p\alpha_3, q\beta_3)+ \frac{\sin 2t}{\sqrt{3}}(p(2\beta_3-\alpha_3),
q(-2\alpha_3+\beta_3))\\
&= \big(p\big(\cos 2t\alpha_3+\frac{\sin 2t(2\beta_3-\alpha_3)}{\sqrt{3}}\big), q\big(\cos
2t\beta_3+\frac{\sin 2t(-2\alpha_3+\beta_3)}{\sqrt{3}}\big)\big),
\end{align*}
which then simplifies to
\begin{align}
\beta_3&=\cos 2t\alpha_3+\frac{\sin 2t}{\sqrt{3}}(2\beta_3-\alpha_3),\nonumber\\
\alpha_3&=\cos 2t \beta_3+\frac{\sin 2t}{\sqrt{3}}(-2\alpha_3+\beta_3).\label{pe3}
\end{align}

{\bf Case $t=\pi/3+k \pi$}\\
Let $t=\pi/3+k\pi$. Then $\cos t=\varepsilon/2$, $\sin t=\varepsilon\sqrt{3}/2$, for
$\varepsilon\in\{1,-1\}$.
From (\ref{pe3}) it follows that $\alpha_3=0$.
Further on
\begin{align*}
E_5&=\frac{2}{3}(p(\alpha_1\times\beta_3-2\beta_1\times\beta_3),q(-\alpha_1\times\beta_3-\beta_1\times\beta_3)),\\
E_6&=-\frac{2}{\sqrt{3}}(p(-\alpha_1\times\beta_3),q(-\alpha_1\times\beta_3+\beta_1\times\beta_3))
\end{align*}
and
\begin{align*}
PE_1&=\cos t E_5+\sin t E_6\\
&=\frac{2}{3}\varepsilon(p(2\alpha_1\times\beta_3-\beta_1\times\beta_3),q(-2\beta_1\times\beta_3+\alpha_1\times\beta_3)).
\end{align*}
Therefore, we have
\begin{align}
\beta_1&=\frac{2}{3}\varepsilon(2\alpha_1\times\beta_3-\beta_1\times\beta_3),\nonumber\\
\alpha_1&=\frac{2}{3}\varepsilon(-2\beta_1\times\beta_3+\alpha_1\times\beta_3).\label{neke}
\end{align}
Since
\begin{align*}
\alpha_1&=\frac{\beta_1-\sqrt{3}\beta_2}{2},\quad
\alpha_2=\frac{\sqrt{3}\beta_1+\beta_2}{2},
\end{align*}
the first equation of (\ref{neke}) simplifies to $\beta_1=2/\sqrt{3}\varepsilon
\beta_2\times\beta_3$ and the second one becomes $\beta_2=2/\sqrt{3}\varepsilon
\beta_1\times\beta_3$. Of course, then $\beta_1$ and $\beta_2$ are orthogonal and $\|\beta_1\|=\|\beta_2\|=1/\sqrt{2}$.
Then
$\beta_3=-\sqrt{3}\varepsilon\beta_1\times\beta_2$.

Therefore, there is a unit quaternion $h$ such that at the point $u=1$ it holds
\begin{align*}
\beta_1(1)&=-\frac{1}{\sqrt{2}}h j h^{-1},\nonumber\\
\beta_2(1)&=\frac{1}{\sqrt{2}}h k h^{-1},\nonumber\\
\beta_3(1)&=\frac{\sqrt{2}}{2}h i h^{-1}.\label{trebace0}
\end{align*}

Now, (\ref{koneksija}) and (\ref{kovarijantni}) yield that

\begin{align}
&E_1(\beta_1)=0,\quad  E_2(\beta_1)=-2\frac{\varepsilon}{\sqrt{3}}\beta_3,\quad
E_3(\beta_1)=\sqrt{3}\beta_2\varepsilon,\nonumber\\
&E_1(\beta_2)=2\frac{\varepsilon}{\sqrt{3}}\beta_3,\quad E_2(\beta_2)=0,\quad
E_3(\beta_2)=-\sqrt{3}\beta_1\varepsilon,\nonumber\\
&E_1(\beta_3)=-\sqrt{3}\beta_2\varepsilon,\quad
E_2(\beta_3)=\sqrt{3}\beta_1\varepsilon,\quad E_3(\beta_3)=0.
\end{align}
This system of differential equations can have a unique solution with prescribed
initial conditions. A straightforward computation show that this solution is given by
\begin{align}
\beta_1&=-\frac{1}{\sqrt{2}}h u j u^{-1} h^{-1},\nonumber\\
\beta_2&=\frac{1}{\sqrt{2}}h u k u^{-1} h^{-1},\nonumber\\
\beta_3&=\frac{\sqrt{2}}{2}h u i u^{-1} h^{-1}.\label{trebace2}
\end{align}
Consequently, we have
\begin{align}
X_1(q)&=- q h u i u^{-1} h^{-1},\nonumber\\
X_2(q)&=- q h u j u^{-1} h^{-1},\nonumber\\
X_3(q)&= q h u k u^{-1} h^{-1}.\label{trebace3}
\end{align}
Indeed, with the initial condition $q(1)=1$, we see that $q(u)=h u^{-1}h$ is the
solution.
In a similar manner we also have that
\begin{align*}
X_1(p)&=0,\\
X_2(p)&=-h u \frac{j+\sqrt{3}k}{2}u^{-1}h^{-1},\\
X_3(p)&=hu\frac{-\sqrt{3}j+k}{2}u^{-1}h^{-1}.
\end{align*}
Let now $g$ be a unit quaternion such that $g h (\sqrt{3}+i)/2 h^{-1}=1$. Then
$p=g h u (\sqrt{3}+i)/2 u^{-1} h^{-1}$ is the solution of the system for the initial
condition $p(1)=1$.
Hence, the immersion is given by
\begin{align*}u\mapsto(g h u \frac{\sqrt{3}+i}{2}
u^{-1} h^{-1}, h u^{-1}h),\end{align*}
and it is clearly congruent to
$f_1: u\mapsto(u (\sqrt{3}+i)/2 u^{-1}, u^{-1})$.

The two other cases when $t_2=\frac{2}{3}\pi+k\pi$ and $t_3=k\pi$, $k\in \R$, can be obtained by using Lemma (\ref{Lemma4}). From $PE_3=\cos 2t E_3+\sin 2t E_4$ we get that $\theta=t$. For $t_1=\frac{1}{3}\pi$, $t_2=\frac{2}{3}\pi$, $t_3=0$ the corresponding angles are $\theta_1=\frac{1}{3}\pi$, $\theta_2=\frac{2}{3}\pi$, $\theta_3=0$. We notice that $\theta_2=\pi-\theta_1$ and we can conclude that the immersion \begin{align*}
f_2(u)=\mathcal F_1(p,q)=(q,p)=(u^{-1}, u \frac{\sqrt{3}+i}{2} u^{-1})
\end{align*} corresponds to the angle $t_2=\frac{2}{3}\pi$. On the other hand, for the immersion \begin{align*}
&f_3(u)=\mathcal F_2\circ\mathcal F_1(p,q)=(\bar{q},p\bar{q})=(u, u \frac{\sqrt{3}+i}{2})
\end{align*} the corresponding angle is $\theta_3=0$.
This ends the proof.

\end{proof}

\section{Acknowledgement}

The authors would like to thank Prof. Luc Vrancken for  meaningful   suggestions  and discussions on the subject of CR submanifolds of $\S^3\times\S^3$.\\
The research of the first and the second author was supported by the Ministry of
Science and Technological Development of the Republic of Serbia, project 174012.
The third author is a Postdoctoraal Onderzoeker van het Fonds Wetenschappelijk Onderzoek-Vlaanderen.

\end{document}